\newtheorem{theorem}{Theorem}[section]
\newtheorem{lemma}[theorem]{Lemma}
\newtheorem{proposition}[theorem]{Proposition}
\newtheorem{corollary}[theorem]{Corollary}
\newtheorem{definition}[theorem]{Definition}
\newtheorem{remark}[theorem]{Remark}
\newcommand{\R}{\mathbb{R}}
\newcommand{\T}{\mathcal{T}}
\newcommand{\B}{\mathcal{B}}
\newcommand{\N}{\mathbb{N}}
\DeclareMathOperator{\cone}{cone}
\DeclareMathOperator{\face}{face}
\DeclareMathOperator{\co}{co}
\DeclareMathOperator{\tr}{tr}
\title{Order ideals in order smooth $p$-normed spaces}
\author{Anindya Ghatak}
\date{}
\begin{document}
\vspace{3in}

\maketitle
\begin{abstract}
We generalize the notion of $M$-ideals in order smooth $\infty$-normed spaces to `` smooth $p$-order ideals''  in order smooth $p$-normed spaces. 
We show that if $V$ is an order smooth $p$-normed space and $W$ is a closed subspace of $V$, then $W$ is a smooth $p$-order ideal in $V$ if and only if $W^{\perp}$ is a smooth $p'$-order ideal
 in order smooth $p'$-normed space if and only if $W^{\perp\perp}$ is a smooth $p$-order ideal in order smooth $p$-normed space $V^{**}$. We prove that every $L$-summand in order smooth $1$-normed 
 space is a smooth $1$-order ideal. We find a condition under which every $M$-ideal in order smooth $\infty$-normed space is a smooth $\infty$-order ideal.
  We show that every $M$-ideal 
 in order smooth $\infty$-normed space is smooth $\infty$-order ideal.
 
 \end{abstract}
\vskip .3cm
Keywords: Ordered normed spaces, Order smooth $p$-normed spaces, $M$-ideals.

\section{Introduction}
 
     In 1972,  E. M. Alfsen with E. G. Effros  introduced the notion of  $M$-ideals for general Banach spaces in a seminal paper \cite{AE72}. The central theme of the paper was the investigation of certain subspaces 
   ($M$-ideals) of $V$ which are analogous to the self-adjoint parts of the closed two sided ideals in $\mathrm{C}^*$-algebras.
     Let $V$ be a Banach space and $W$ be a closed subspace of $V$. Then $W$ is called \emph{$M$-ideal} in $V$ if 
  $$V^*= W^{\perp}\oplus_{1}W^{\perp'}.$$
  
  Let $A(K)$ denote the space of all continuous affine functions on $K$, where $K$ is a compact convex subset of some locally 
convex space $E$  (see e.g.\cite{Alf}). The notion of $A(K)$ spaces was introduced by Kadison in 1951 in order to study the order structure of $\mathrm{C}^*$-algebra \cite{KR51}. He proved that the self-adjoint 
part of a unital $\mathrm{C}^*$-algebra. In particular, and the self adjoint part of unital subspace of it is an order unit space. He further showed that every complete order unit space are isometrically order isomorphic to $A(K)$ space.

 The study of ordered Banach spaces by (different) geometric properties and its ideal theory 
 was initiated in 1950's in the works of And\^o, Bonsall, Edwards, Ellis, Asimov, Ng and many others (see e.g. \cite{A62, ASMW73, ASMW74,BNSL56, BNSL54, ELLIS64}).

  E. Str\o mer \cite{S68} in 1966 studied  Archimedean ideals in order unit spaces ( He called order unit space as Archimedean order unit space) which have a strong order unit and 
  are complete in the order unit norm.
  More preciously, a closed subspace $W$ of $V$ is an \emph{Archimedean ideal} if 
   \begin{enumerate}
    \item [(i)] $W$ is an order ideal in $V$;
      \item [(ii)]$W$ is positively generated;
   \item [(iii)]  $V/W$ is Archimedean.
  \end{enumerate}
  There he characterized that if $\mathcal{I}$ is a closed subspace of a $\mathrm{C}^*$-algebra $\mathcal{A}$, then $\mathcal{I}_{sa}$ is an Archimedean ideal in $\mathcal{A}_{sa}$ if and only if $\mathcal{I}$ is a two sided ideal
  in $\mathcal{A}$.

It follows from \cite{AE72} that every $M$-ideal in $A(K)$-space is an Archimedean ideal. Also an Archimedean ideal $W$ in $A(K)$ is an \emph{M-ideal} if the corresponding (quotient) homomorphism
  $\varphi :A(K)\mapsto A(K)/W$ satisfies: For every $\epsilon >0$ and $a_{1}, a_{2}\in A(K)^+$, one has
$$
[0, \varphi(a_{1})]\cap [0, \varphi(a_{2})]\subset \varphi([0, a_{1}+ \epsilon]\cap [0, a_{2}+
\epsilon]). 
$$
The above theorem can be found from the papers \cite{ALFAND, AE72}. Thus one can think Archimedean ideals in affine function spaces $A(K)$ are the generalization of $M$-ideals in affine function 
spaces $A(K)$. It is unknown that if $W$ is an Archimedean ideal in $A(K)$ space what structure do the subspace $W^{\perp}$ preserve in $A(K)^*$-space.

 Given an ordered normed space $V$ with closed cone $V^+\subset V$. Then  $V$ has $\alpha$-normal property ( i.e.
  there is a constant $\alpha >0$ such that if
$ u\leq v\leq w \text{ ~~in~~} (V,V^+) $ then one has $\Vert v\Vert \leq {\alpha}\max\{\Vert u\Vert, \Vert w \Vert\}$)
if and only if $V^*$ satisfies $\alpha$-generating property ( i.e.
there is a constant $\alpha >0$ such that for each $v\in V$, there are $v_{1}, v_{2}\in V^+$ such that $v= v_{1}-v_{2}$ and $\Vert v_{1}\Vert+ \Vert v_{2}\Vert \leq \alpha \Vert v\Vert$).
The above $1$-normal property is important because it occurs in every self-adjoint part of 
$\mathrm{C}^*$-algebra \cite{KR97} and in every affine function space $A(K)$ \cite{Alf}. On the other hand, $1$-generating occurs in the dual of the self adjoint part of every $\mathrm{C}^*$-algebra and also in dual of every $A(K)$-space.

 In 2010, A. Karn \cite{Karn10} proposed the geometric axioms $(O.p.1)$ and $(O.p.2)$ for $1\leq p\leq \infty$ in general ordered normed spaces. There $1$-normal properties was 
 renamed by $(O.\infty.1)$ and $1$-generating by $(OS.1.2)$. In that paper, he extended $(O.\infty.1)$ and $(O. \infty.2)$ by $(O.p.1)$ and $(O.p.2)$ respectively. One can note that every self-adjoint part of $\mathrm{C}^*$-algebra, and $A(K)$
 has $(OS. \infty.2)$ properties.
 There is an advantage of studying geometric properties $(O.p.1)$ and $(O.p.2)$ (for  $1\leq  p < \infty$) in general 
 ordered normed spaces because the classical Banach spaces $L^{p}(\mu)$ for $1\leq p < \infty$ and self-adjoint part of trace $p$-class operators
  ( i.e. $\mathcal{T}_{p}(H)_{sa}=\{T\in \B(H)_{sa}: \tr(\vert T\vert^p)<\infty\})$ for $1\leq p <\infty$ has such geometric properties. For more literature about order smooth $p$-normed spaces
   one can see the work of A. K. Karn \cite{KARN16, KARN14}.
   
  Let $(V, V^+, e)$ be an order unit space with order unit $e$. Then $V$ is an order normed space (normed given by the order unit $e$) having $(O.\infty.1)$ and $(OS.\infty.2)$ geometric properties \cite{PT09}.
 Let $(V, \{M_{n}(V)^+\}, e)$ be an abstract operator system. Then for each $n$, $(M_{n}(V)_{sa}, M_{n}(V)^+, e^n)$ is an order unit space. Thus it has $(O.\infty.1)$ and $(OS.\infty.2)$ geometric properties \cite{PTT11}. 

In this paper, we attempt the following:
  \begin{enumerate}
\item [(i)] To provide a general framework for providing notion of ideals on order smooth $p$-normed spaces which we shall call as ``smooth $p$-order ideal''.
  \item [(ii)] To establish duality theories for such ideals.
\end{enumerate}

Note that $M$-ideals can be studied in order smooth $\infty$-normed spaces (see e.g. \cite{AA18}). But non trivial $M$-ideals may not exist in order smooth $p$-normed spaces for $1\leq p<\infty.$ 
(see e.g. \cite[Theorem $1.8$]{HWW}). The study of ideals in $A(K)$ spaces and ordered normed spaces also can be found in the works of Bonsal \cite{BNSL56, BNSL54}.
 Such notions of ideals are not possible to bring in the context of order smooth $p$-normed spaces.

Thus for first problem, we substitute the definition of the Archimedean ideals in order unit spaces in the context of order smooth $\infty$-normed spaces by ``smooth $\infty$-order ideals''.
 Also we extend the notion of smooth $\infty$-order ideals in order smooth $\infty$-normed spaces by smooth $p$-order ideals in order smooth $p$-normed spaces for $1\leq p\leq \infty$ (see e.g. Definition \ref{spoi}).
 For $p=1$, it generalize the notion of $L$-summands in order smooth $1$-normed spaces (see Theorem \ref{d11}). For $p=\infty$, it will generalize the notion of $M$-ideals in order smooth $\infty$-normed spaces  
 under certain condition (see e.g. Proposition \ref{d7} and Theorem \ref{d9}). This condition can be redundant in $A(K)$-spaces (see e.g. Remark \ref{d10}).
 
For second problem, we prove that if $W$ is a closed subspace of an order smooth $p$-normed space. Then $W$ is a smooth $p$-order ideal if and only if $W^{\perp}$ is a smooth $p'$ order 
ideal in $V^*$ (see e.g. Theorem \ref{b2}) if and only if $W^{\perp\perp}$ is an smooth $p$-order ideal in $V^{**}$(under the certain conditions)(see e.g. Theorem \ref{b5}).

\section{Preliminaries}
 A subset $C$ of a real vector space $V$ is a  \emph{cone} if  $\lambda x+y\in C$ whenever $x,y\in C$ and $\lambda \in \R^+$. A cone $C$ is \emph{proper} if $C\cap -C=\{0\}.$ An \emph{ordered vector space} is pair $(V, V^{+})$,
 where $V$ is a real vector space and $V^+$ is a cone. Let $V$ be an ordered vector space and $V^*$ be the dual of $V$. Then $V^*$ is an ordered vector space together with cone $V^+=\{f\in V^{*}: f(v)\geq 0~ \forall v \in V^{+}\}$.
 We define order relation $x\leq y$ if $y-x\in V^+$. If $W$ is subspace of ordered vector space $V$, then $W$ is also an ordered vector space together with cone $W^+= W\cap 
 V^{+}$. Let $\varphi_{W}: V \mapsto V/W$ be the canonical homomorphism. Then $V/W$ is also an ordered vector space together with cone $\varphi_{W}(V^+)$. We say that $W$ is an \emph{order ideal} in $V$ if $u,v\in W$
 and $w \in V$ such that $u\leq v\leq w$ imply $w\in W$. For basic idea regarding order structures one can see from the book \cite{GJEM}. 

A \emph{ordered normed space} is a triple $(V, V^+, \Vert  . \Vert  )$, where $(V,  \Vert  . \Vert  )$ is a normed linear space and $V^+$ is a cone. The natural way to define a cone $V^{*+}$ on $V^{*}$ by 
 $$V^{*+}:=\{f\in V^{*}: f(v)\geq 0~ \forall v \in V^{+}\}.$$
 We note that $V^{*+}$ is a $w^*$-closed set in $V^*$. In particular, if $V^{+}$ is a cone in ordered normed space. Then $V^{* +}$ is a norm closed cone in $V^{*}$.
 Let $(V, V^+,  \Vert  . \Vert  )$ be an ordered normed space. Then we can define another cone  $V_{+}$ on $V$ by 
 $$V_{+}=\{v\in V: f(v)\geq 0~ \forall f\in V^{*+}\}.$$
 Similarly,  we can define cones for $V^{*}_{+}, V^{**}_{+}$ and so on. The following proposition connect relation between cones.
\begin{proposition}\label{a1}
Let $(V,V^+,  \Vert  . \Vert  )$ be an ordered normed space. Then  we have following:
\begin{itemize}
\item [(i)] If $V^+$ is norm closed, then $V^{+}=V_{+}$; 
\item [(ii)]  $V^{*+}=V^{*}_{+}$;
\item [(iii)] $V^{**+}=V^{**}_{+}$.
\end{itemize} 
\end{proposition}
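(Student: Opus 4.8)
The plan is to prove (i) directly from the Hahn--Banach separation theorem and then to deduce (ii) and (iii) by applying (i) to the ordered normed spaces $(V^{*},V^{*+},\Vert\cdot\Vert)$ and $(V^{**},V^{**+},\Vert\cdot\Vert)$.

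For (i), the inclusion $V^{+}\subseteq V_{+}$ is immediate: if $v\in V^{+}$ then $f(v)\geq 0$ for every $f\in V^{*+}$ by the very definition of $V^{*+}$, so $v\in V_{+}$. For the reverse inclusion I would argue by contraposition. Suppose $v_{0}\in V\setminus V^{+}$. Since $V^{+}$ is a norm-closed convex cone (it contains $0$ and is stable under addition and multiplication by nonnegative scalars), strict separation of the point $v_{0}$ from the closed convex set $V^{+}$ gives $f\in V^{*}$ and $\alpha\in\R$ with $f(v_{0})<\alpha\leq f(v)$ for all $v\in V^{+}$. Evaluating at $v=0$ forces $\alpha\leq 0$, hence $f(v_{0})<0$; and since $\lambda v\in V^{+}$ for every $v\in V^{+}$ and every $\lambda\geq 0$, the inequality $\lambda f(v)\geq\alpha$ (letting $\lambda\to\infty$) forces $f(v)\geq 0$ for all $v\in V^{+}$, i.e.\ $f\in V^{*+}$. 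Thus there is a positive functional $f$ with $f(v_{0})<0$, so $v_{0}\notin V_{+}$. This yields $V_{+}\subseteq V^{+}$, and (i) follows. (Equivalently, this is the bipolar theorem for cones: a norm-closed convex cone coincides with its second dual cone.)

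For (ii) and (iii), the observation to make first is that dual cones are automatically closed. Indeed $V^{*+}=\bigcap_{v\in V^{+}}\{f\in V^{*}: f(v)\geq 0\}$ is an intersection of $w^{*}$-closed half-spaces, hence $w^{*}$-closed and \emph{a fortiori} norm-closed; similarly $V^{**+}=\bigcap_{f\in V^{*+}}\{\phi\in V^{**}:\phi(f)\geq 0\}$ is $\sigma(V^{**},V^{*})$-closed, hence norm-closed. Now $(V^{*},V^{*+},\Vert\cdot\Vert)$ is an ordered normed space, and the cone manufactured from it by the recipe preceding the proposition is precisely $V^{*}_{+}=\{f\in V^{*}:\phi(f)\geq 0\ \text{for all}\ \phi\in V^{**+}\}$; since $V^{*+}$ is norm-closed, part (i) applied to $V^{*}$ gives $V^{*+}=V^{*}_{+}$, which is (ii). Applying (i) once more to $(V^{**},V^{**+},\Vert\cdot\Vert)$, whose positive cone is norm-closed by the same argument, gives $V^{**+}=V^{**}_{+}$, which is (iii).

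The only genuinely delicate point is the separation step in (i): one must verify that the separating functional can be \emph{taken in the dual cone} $V^{*+}$ rather than merely separating $v_{0}$ from $V^{+}$, and this is exactly where the homogeneity of the cone and the membership $0\in V^{+}$ are used. Everything after that — in particular the passage to (ii) and (iii) — is routine bookkeeping, resting only on the fact that dual cones are weak$^{*}$-closed.
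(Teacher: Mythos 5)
Your proposal is correct and follows essentially the same route as the paper: part (i) by Hahn--Banach separation of a point from the norm-closed cone $V^{+}$, with the separating functional seen to lie in $V^{*+}$, and parts (ii) and (iii) by the same separation argument, which the paper invokes verbatim and which you package as applying (i) to $(V^{*},V^{*+})$ and $(V^{**},V^{**+})$ after noting that dual cones are automatically weak$^{*}$-closed, hence norm-closed. There is no gap.
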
 
\begin{proof}
By definition, $V^{+} \subseteq V_{+}$. If possible, let $v\in V_{+}\setminus V^{+}$.  Thus by the Hahn Banach Separation theorem, there is a $f\in V^{*}$ such that $f(v)<0$  and $f(w)\geq 0$ for all $w\in V^{+}$ so that 
$f \in V^{*+}$. Since $v\in V_{+}$ and $f\in V^{*+}$, we have $f(v)\geq 0$ which is a contradiction. Hence $V^{+}=V_{+}$. 
Same technique can be used to prove (ii) and (iii). 
\end{proof}
 Let $V$ be a Banach space and $V^*$ be its Banach dual. Let $W$ be a closed subspace of $V$. Then we have following Banach space isometry:
\begin{itemize}
 \item [(i)]$(V/W)^*\cong W^{\perp}$; 
     \item [(ii)] $W^{*}\cong V^{*}/W^{\perp}$;
     \item [(iii)]  $(V/W)^{**}\cong(W^{\perp})^{*}\cong V^{**}/W^{\perp\perp}$;
   \item [(iv)]  $W^{**}=(V^{*}/W^{\perp})^{*}\cong W^{\perp\perp}$.
\end{itemize} 
Let $V$ be an ordered normed space with closed cone $V^+$ and $W$ be a closed subspace of $V$. Then $W, W^{\perp}, W^{\perp\perp}$ are also ordered normed spaces with closed cone given by
 $W^{+}= W\cap V^+, W^{\perp+}= W^{\perp}\cap V^{*+}$ and $W^{\perp\perp+}=W^{\perp\perp}\cap V^{**+}$ respectively.
Let $\varphi_{W}:V\mapsto V/ W$, $\varphi_{W^{\perp}}:V^{*}\mapsto V^{*}/ W^{\perp}$ and $\varphi_{W^{\perp\perp}}:V^{**} \mapsto V^{**}/W^{\perp\perp}$
be the natural homomorphisms. Then $\varphi{(V^+)}$ is not a closed cone in $V/W$ and so for $\varphi_{W^{\perp}}(V^{*+}),$ and $\varphi_{W^{\perp\perp}}(V^{**+})$. The following proposition describe the topological properties
of the possible cones of the quotient spaces of the given ordered normed spaces.
\begin{proposition}\label{a3}
Let $V$ be an ordered normed space and $W$ be a closed subspace of $V$. If $\varphi_{W}, \varphi_{W^{\perp}},\varphi_{W^{\perp\perp}}$ be the natural homomorphisms. Then we have following cone relations:
\begin{itemize}
\item [(i)] $\overline{\varphi_{W}(V^{+})}^{ \Vert  . \Vert  }=\overline{\varphi_{W}(V^{+})}^{w}$;
 \item [(ii)] $\overline{\varphi_{W^{\perp}}(V^{*+})}^{ \Vert  . \Vert  }=\overline{\varphi_{W^{\perp}}(V^{*+})}^{w}$
 and
  \item [(iii)]  $\overline{\varphi_{W^{\perp\perp}}(V^{**+})}^{ \Vert  . \Vert  }=\overline{\varphi_{W^{\perp\perp}}(V^{**+})}^{w}$. 
 \end{itemize}
 \begin{proof}
It is sufficient to prove (i), as same arguments can be used for (ii) and (ii).  
 Let $v+ W\in \overline{\varphi_{W}(V^+)}^{ \Vert . \Vert }$. Then there exist $v_{n}\in V^{+}$ such that ${v_{n}+ W}$ convergent to $v+ W$ in norm.  Thus $ \Vert  f(v_{n}-v) \Vert  \rightarrow 0$ for all $f\in W^{\perp}$. 
 Thus $v_{n}+ W\rightarrow v+ W $ in $w$-topology. Thus $ \overline{\varphi_{W}(V^+)}^{ \Vert . \Vert }\subset  \overline{\varphi_{W}(V^+)}^{w}$.  
 
 Conversely, if possible let $v+ W\in \overline{\varphi_{W}(V^+)}^{w}\setminus \overline{\varphi_{W}(V^+)}^{ \Vert . \Vert }$. Then by the Hahn Banach separation theorem, there is $f\in W^{\perp}$
 such that $f(v)<0 $ and $f(u)\geq 0$ for all $u\in V^{+}$. Thus $f\in V^{*+}$.  
 Since $v+ W\in \overline{\varphi_{W}(V^{*+})}^{w}$, thus there exist a net $\{v_{\alpha}+ W\}$, where $v_{\alpha}\in V^{+}$
 such that $v_{\alpha}+W\rightarrow v+W$ in $w$-topology. Thus we have $f(v_{\alpha})\rightarrow f(v)$. Since $f(v_{\alpha})\geq 0$, we have $f(v)\geq 0$, which is a contradiction. Hence $\overline{\varphi_{W}(V^{+})}^{ \Vert  . \Vert  }=\overline{\varphi_{W}(V^{+})}^{w}$.
 \end{proof}
\end{proposition}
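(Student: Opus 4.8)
The plan is to recognize all three assertions as a single statement — Mazur's theorem, that a convex subset of a normed space has the same closure in the norm and weak topologies — applied to three different normed spaces. Indeed $V/W$, $V^{*}/W^{\perp}$ and $V^{**}/W^{\perp\perp}$ are normed spaces, being quotients of normed spaces by closed subspaces, and their duals are, by the Banach-space isometries recalled above, $W^{\perp}$, $W^{\perp\perp}$ and $W^{\perp\perp\perp}=W^{\perp}$ respectively, so the topology written $w$ in each case is the genuine weak topology of the quotient space. Moreover $\varphi_{W}(V^{+})$, $\varphi_{W^{\perp}}(V^{*+})$ and $\varphi_{W^{\perp\perp}}(V^{**+})$ are cones, hence convex. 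It therefore suffices to prove (i); (ii) and (iii) follow mutatis mutandis with $(V,W)$ replaced by $(V^{*},W^{\perp})$ and $(V^{**},W^{\perp\perp})$, the cones $V^{*+}$ and $V^{**+}$ being norm-closed by Proposition \ref{a1}.

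First I would dispose of the easy inclusion $\overline{\varphi_{W}(V^{+})}^{\,\Vert\cdot\Vert}\subseteq\overline{\varphi_{W}(V^{+})}^{\,w}$: the norm topology on $V/W$ is finer than its weak topology, so every norm-convergent net is weakly convergent, and hence every norm-closure point is a weak-closure point.

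For the reverse inclusion I would argue by contradiction. Put $C:=\overline{\varphi_{W}(V^{+})}^{\,\Vert\cdot\Vert}$; as the norm-closure of a convex cone, $C$ is a norm-closed convex cone, and in particular $0\in C$. Suppose some $v+W$ lies in $\overline{\varphi_{W}(V^{+})}^{\,w}$ but not in $C$. Applying the Hahn--Banach separation theorem to the compact set $\{v+W\}$ and the closed convex set $C$, and using that $C$ is a cone so that the separating functional is forced to be nonnegative on $C$, I obtain $f\in(V/W)^{*}$ with $f(v+W)<0$ and $f\ge 0$ on $C$. Under the identification $(V/W)^{*}\cong W^{\perp}$ this $f$ lies in $W^{\perp}$, and since $f\ge 0$ on $C\supseteq\varphi_{W}(V^{+})$ it in fact lies in $W^{\perp}\cap V^{*+}$. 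On the other hand, $v+W\in\overline{\varphi_{W}(V^{+})}^{\,w}$ supplies a net $v_{\alpha}\in V^{+}$ with $\varphi_{W}(v_{\alpha})\to v+W$ weakly, whence $f(\varphi_{W}(v_{\alpha}))\to f(v+W)$; but $f(\varphi_{W}(v_{\alpha}))\ge 0$ for every $\alpha$, forcing $f(v+W)\ge 0$, a contradiction. Hence the two closures coincide, which proves (i), and the same argument gives (ii) and (iii).

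I do not anticipate a genuine obstacle: this is a routine separation argument. The only points needing a little care are (a) choosing the separating functional so that it is nonnegative on the quotient cone — this is the standard separation theorem for a point and a closed convex cone, and is the one place where the order structure is used — and (b) confirming, in (ii) and (iii), that ``$w$'' denotes the weak topology of the quotient \emph{Banach} space rather than any weak* topology, which is precisely what makes Mazur's theorem applicable verbatim.
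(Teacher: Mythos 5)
Your proof is correct and follows essentially the same route as the paper: the easy inclusion because the norm topology is finer than the weak topology, and the reverse inclusion by Hahn--Banach separation of the point from the norm-closed quotient cone, yielding a functional in the dual of the quotient (i.e.\ in $W^{\perp}$ for (i)) that is nonnegative on the image of the cone yet strictly negative at the point, contradicted by the weakly convergent net. One negligible side remark: $(V^{**}/W^{\perp\perp})^{*}\cong W^{\perp\perp\perp}\cong (W^{\perp})^{**}$ rather than $W^{\perp}$ in general, but your argument never actually uses this identification, so nothing is affected.
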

 
  \begin{definition}\label{a4}
  Let $V$ be an ordered normed space and $W$ be a closed subspace of $V$. Let $\varphi_{W}:V\mapsto V/ W$, $\varphi_{W^{\perp}}:V^{*}\mapsto V^{*}/ W^{\perp}$ and 
  $\varphi_{W^{\perp\perp}}:V^{**} \mapsto V^{**}/W^{\perp\perp}$ be the natural homomorphisms. Then we define order structure on $V/W, V^*/W^{\perp}$ and $V^{**}/W^{\perp\perp}$ as 
   \begin{enumerate}
   \item [(i)] $(V/W)^{+}:=\overline{\varphi_{W}(V^+)}^{ \Vert  . \Vert }$;
   \item [(ii)] $(V^*/W^{\perp})^{+}:=\overline{\varphi_{W^{\perp}}(V^{*+})}^{w^*}$;
   \item [(iii)] $(V^{**}/W^{\perp\perp})^{+}:=\overline{\varphi_{W^{\perp\perp}}(V^{**+})}^{w^*}$.
\end{enumerate} 
\end{definition}

The discussion above the Proposition \ref{a3} says about the fact that if Banach space have pre-dual, the its cone should be $w^*$-closed set. Owing to this fact in mind, we propose the definition of cones for $V^*/W^{\perp}$
 and $V^{**}/W^{\perp\perp}$ in Definition \ref{a4}. We call such cone as \emph{quotient cone} (as this cones are constructed by the help of quotient homomorphism).
 
\begin{definition}\cite{Karn10}
 Let $(V,V^+)$ be a real ordered vector space such that $V^+$ is a proper, generating and let $ \Vert  . \Vert  $ be a norm on $V$ such that $V^+$ is closed. For fixed real number $p (1\leq p< \infty)$ 
 consider the following two geometric properties on $V$:
      
         \begin{enumerate}
                 \item [(i)] $(O.p.1)$ For $u,v,w$ with $u\leq v\leq w$, we have $  \Vert  v \Vert  \leq ( \Vert   u \Vert  ^p+ \Vert   w \Vert  ^p)^{\frac{1}{p}}$;
                      \item [(ii)]$(O.p.2)$ For $v\in V$ and $\epsilon> 0$, there are $v_1,v_2\in V^+$ such that $v=v_1-v_2 $ and $( \Vert  v_1 \Vert  ^p+ \Vert  v_2 \Vert  ^p)^\frac{1}{p}< \Vert  v \Vert  + \epsilon$;
                  \item [(iii)]$(OS.p.2)$ For $v\in V$, there are $v_1,v_2\in V^+$ such that $v=v_1-v_2$ and $( \Vert  v_1 \Vert  ^p+ \Vert  v_2 \Vert  ^p)^\frac{1}{p}\leq  \Vert  v \Vert  $.
         \end{enumerate}
      For $p=\infty$, consider the similar conditions on $V$:
      
            \begin{enumerate}
                  \item [(i)]$(O.\infty.1)$ For $u,v,w$ with $u\leq v\leq w$, we have  $  \Vert  v \Vert  \leq \max \{ \Vert  u \Vert  , \Vert  w \Vert  \}$;
                      \item [(ii)]$(O.\infty.2)$ For $v\in V$ and $\epsilon> 0$, there exist $v_1, v_2\in V^+$ such that $v=v_1-v_2 $ and $\max\{ \Vert  v_1 \Vert  , \Vert  v_2 \Vert  \}<  \Vert  v \Vert  +\epsilon$;
                  \item [(iii)] $(OS.\infty.2)$ For $v\in V$, there are $v_1, v_2\in V^+$ such that $v=v_1-v_2$ and $\max( \Vert  v_1 \Vert  ,  \Vert  v_2 \Vert  ) \\
                  \leq  \Vert  v \Vert $.
             \end{enumerate}
 \end{definition}
 
 \begin{theorem}\cite{Karn10}\label{p-theory}
    Let $(V,V^+,  \Vert . \Vert )$ be a real ordered vector space such that $V^{+}$ is proper and generating. Let $ \Vert  . \Vert  $ be a norm on $V$ such that $V^+$ is closed. For fixed real number 
    $p (1\leq p\leq \infty)$, we have
          \begin{enumerate}
          \item [(i)] $ \Vert  . \Vert  $ satisfies $(O.p.1)$ condition on $V$ if and only if $ \Vert  . \Vert  ^{*}$ satisfies the condition $(OS.p'.2)$ on the Banach dual $(V^*, V^{*+}, \Vert.\Vert)$.
                 \item [(ii)] $ \Vert  . \Vert  $ satisfies the condition $(O.p.2)$ on $V$ if and only if $ \Vert  . \Vert  ^{*}$ satisfies the condition $(O.p'.1)$ on $(V^*, V^{*+}, \Vert.\Vert)$.
           \end{enumerate}
   \end{theorem}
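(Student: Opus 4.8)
I would establish the four implications separately, handling the two directions that \emph{use} a decomposition by H\"older's inequality, the two that must \emph{produce} one by a Hahn--Banach extension on an $\ell^p$-direct sum, and reducing the remaining case to the bidual. For the H\"older directions: suppose $\|\cdot\|^*$ satisfies $(OS.p'.2)$ on $V^*$ and let $u\le v\le w$ in $V$. For $f\in V^*$ with $\|f\|\le 1$, write $f=f_1-f_2$ with $f_i\in V^{*+}$ and $(\|f_1\|^{p'}+\|f_2\|^{p'})^{1/p'}\le 1$. Then $f_1\ge 0$ and $v\le w$ give $f_1(v)\le\|f_1\|\,\|w\|$, while $f_2\ge 0$ and $u\le v$ give $-f_2(v)\le\|f_2\|\,\|u\|$, so by H\"older $f(v)\le\|f_1\|\,\|w\|+\|f_2\|\,\|u\|\le(\|u\|^p+\|w\|^p)^{1/p}$; taking the supremum over $f$ gives $(O.p.1)$. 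The implication ``$(O.p.2)$ on $V\Rightarrow(O.p'.1)$ on $V^*$'' is symmetric: for $f\le g\le h$ in $V^*$ and $v\in V$ with $\|v\|\le 1$, decompose $v=v_1-v_2$ with $v_i\in V^+$ and $(\|v_1\|^p+\|v_2\|^p)^{1/p}<1+\epsilon$, bound $g(v)\le\|h\|\,\|v_1\|+\|f\|\,\|v_2\|$, apply H\"older, and let $\epsilon\to 0$. The endpoints $p\in\{1,\infty\}$ need only the degenerate forms of H\"older.

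\emph{The key direction: $(O.p.1)$ on $V\Rightarrow(OS.p'.2)$ on $V^*$.} Let $E=V\oplus_p V$ carry the norm $\|(x,y)\|_E=(\|x\|^p+\|y\|^p)^{1/p}$ and the cone $E^+=V^+\times V^+$, so that $E^*=V^*\oplus_{p'}V^*$ isometrically. Fix $f\in V^*$ with $\|f\|=1$ and define, for $(x,y)\in E$,
$$q(x,y)=\inf\bigl\{(\|x'\|^p+\|y'\|^p)^{1/p}\ :\ x'-x\in V^+,\ y'-y\in V^+\bigr\}.$$
Minkowski's inequality makes $q$ a finite sublinear functional; taking $(x',y')=(x,y)$ shows $q(x,y)\le\|(x,y)\|_E$; and for $(x,y)\in -E^+$ the pair $(0,0)$ is admissible, so $q(x,y)\le 0$. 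The point of the construction is that on the diagonal $D=\{(v,-v):v\in V\}$ every admissible pair satisfies $-y'\le v\le x'$, whence $(O.p.1)$ yields $\|v\|\le(\|x'\|^p+\|y'\|^p)^{1/p}$, i.e. $q(v,-v)\ge\|v\|\ge f(v)$. Thus $(v,-v)\mapsto f(v)$ is a linear functional on $D$ dominated by $q$; extend it by Hahn--Banach to $g$ on $E$ with $g\le q$. Then $g\le q\le 0$ on $-E^+$ forces $g\ge 0$ on $E^+$; $g\le q\le\|\cdot\|_E$, applied also to $-(x,y)$, gives $\|g\|_{E^*}\le 1$; and setting $f_1(v)=g(v,0)$, $f_2(v)=g(0,v)$ we get $f_1,f_2\in V^{*+}$, $f_1(v)-f_2(v)=g(v,-v)=f(v)$, hence $f=f_1-f_2$ with $(\|f_1\|^{p'}+\|f_2\|^{p'})^{1/p'}=\|g\|_{E^*}\le 1=\|f\|$, which is $(OS.p'.2)$. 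I expect the only genuinely delicate choice here to be that $x',y'$ must \emph{not} be constrained to lie in $V^+$: that freedom makes $q\le\|\cdot\|_E$ automatic, while $(O.p.1)$ still applies on $D$ because one needs $x'$, $y'$ only to dominate $\pm v$.

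\emph{The last direction: $(O.p'.1)$ on $V^*\Rightarrow(O.p.2)$ on $V$.} Applying the previous step to $V^*$ gives $(OS.p.2)$ on $V^{**}$, and then the H\"older computation yields $\|\psi\|\le(\|\psi\|_+^{p'}+\|\psi\|_-^{p'})^{1/p'}$ for all $\psi\in V^{***}$, where $\|\psi\|_\pm=\sup\{\pm\psi(F):F\in V^{**+},\,\|F\|\le 1\}$. A bipolar argument shows $V^{**+}\cap B_{V^{**}}=\overline{V^+\cap B_V}^{\,w^*}$, so for $f\in V^*\subseteq V^{***}$ the quantities $\|f\|_\pm$ agree with $\sup\{\pm f(v):v\in V^+,\,\|v\|\le 1\}$; hence $\|f\|\le(\|f\|_+^{p'}+\|f\|_-^{p'})^{1/p'}$ for every $f\in V^*$. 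Finally, if some $v_0\in V$ admitted no positive decomposition with $(\|v_1\|^p+\|v_2\|^p)^{1/p}<\|v_0\|+\epsilon$, I would separate $v_0$ from the convex set $C=\{v_1-v_2:v_i\in V^+,\ (\|v_1\|^p+\|v_2\|^p)^{1/p}<\|v_0\|+\epsilon\}$ by some $f\in V^*$; computing $\sup_C f=(\|v_0\|+\epsilon)(\|f\|_+^{p'}+\|f\|_-^{p'})^{1/p'}$ via H\"older and combining with $f(v_0)\le\|f\|\,\|v_0\|$ and the previous inequality produces a contradiction, yielding $(O.p.2)$ on $V$. \emph{The main obstacle} I foresee is exactly this descent: the extension machinery produces the decomposition naturally in $V^{**}$, and one must argue carefully that it comes back to $V$ — equivalently, that the set $C$, or the associated ``decomposition gauge'' $v\mapsto\inf\{(\|v_1\|^p+\|v_2\|^p)^{1/p}:v=v_1-v_2,\ v_i\in V^+\}$, is well enough behaved (e.g. lower semicontinuous) for the separation to be carried out in $V$ rather than only in $V^{**}$.
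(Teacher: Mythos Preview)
The paper does not prove this theorem at all: it is stated in the Preliminaries section with the citation \cite{Karn10} and no proof is given. There is therefore nothing in the paper to compare your argument against.

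For what it is worth, your outline is essentially the standard proof of this duality (the $p=\infty$ case is the classical $\alpha$-normal/$\alpha$-generating duality of Grosberg--Krein, Ellis, Ng et al., and Karn's contribution is the $p$-interpolation). The H\"older directions are routine, and your Hahn--Banach extension on $E=V\oplus_p V$ with the sublinear gauge $q$ is exactly the right device for producing the $(OS.p'.2)$ decomposition in $V^*$; this is how the result is proved in the literature. Your identified ``main obstacle'' in the last implication---descending from $(OS.p.2)$ on $V^{**}$ to $(O.p.2)$ on $V$---is genuine but surmountable: the cleanest route is not via a bipolar of $V^+\cap B_V$ (which, as you suspect, needs care), but rather to run the Hahn--Banach/sublinear-gauge argument directly on $V$. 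Define on $V$ the gauge $N(v)=\inf\{(\|v_1\|^p+\|v_2\|^p)^{1/p}:v=v_1-v_2,\ v_i\in V^+\}$; this is a seminorm, and its dual unit ball in $V^*$ is $\{f:\|f\|_+^{p'}+\|f\|_-^{p'}\le 1\}$ (H\"older again). The hypothesis $(O.p'.1)$ on $V^*$ lets you bound $\|f\|$ by $(\|f\|_+^{p'}+\|f\|_-^{p'})^{1/p'}$ \emph{directly in $V^*$} (just test $f\le g\le h$ with $g=f$, $h=f_+$-type choices, or more simply separate), which immediately gives $N(v)\ge\|v\|$ by duality, i.e.\ $(O.p.2)$, without ever passing to $V^{**}$.
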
 
 
 \begin{definition}\cite{Karn10}
  Let $(V,V^+,  \Vert . \Vert )$ be a real ordered vector space such that $V^+$ is proper, generating and let $ \Vert  . \Vert  $ be a norm on $V$ such that $V^+$ is closed. For a fixed $p$, $1\leq p\leq  \infty$, 
  we say that $V$ is  an 
\emph{ordered smooth $p$-normed space}, if $ \Vert  . \Vert  $ satisfies the conditions $(O.p.1)$ and $(O.p.2)$ on $V$.
\end{definition}
Form earlier discussion in Section 1., it is clear that every self adjoint part of $\mathrm{C}^*$-algebra and affine functions space $A(K)$ and operator system are the examples of order smooth $\infty$-normed spaces.
Along with that the classical $L^{p}(\mu)$ and space of trace $p$-class operators $\T_{p}(H)$ are the example of order smooth $p$-normed spaces for $ 1\leq p <\infty.$
\begin{theorem}\label{duality-of-p-thry}\cite{Karn10} 
 Let $(V,V^+,  \Vert . \Vert )$ be a real ordered vector space such that $V^+$ is proper, generating and let $ \Vert  . \Vert  $ be a norm on $V$ such that $V^+$ is closed. For a fixed
  $p, 1\leq p\leq\infty$, $(V, V^+, \Vert.\Vert)$ is an order smooth $p$-normed space if and only if its Banach dual $(V^{*}, V^{*+}, \Vert.\Vert^*)$ is an order smooth $p'$-normed space satisfying the condition $(OS.p'.2)$. 
\end{theorem}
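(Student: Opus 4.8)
The plan is to read the theorem off from Theorem \ref{p-theory} together with the definition of ``order smooth $p$-normed space'', which is simply the conjunction of the two axioms $(O.p.1)$ and $(O.p.2)$. Since Theorem \ref{p-theory} dualizes each of these axioms separately, the statement should follow by combining the two equivalences and doing a little bookkeeping with the auxiliary condition $(OS.p'.2)$.

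Concretely, I would argue the chain: $(V,V^+,\|\cdot\|)$ is an order smooth $p$-normed space $\iff$ $\|\cdot\|$ satisfies $(O.p.1)$ and $(O.p.2)$ on $V$ $\iff$ [by Theorem \ref{p-theory}(i)] $\|\cdot\|^*$ satisfies $(OS.p'.2)$ on $V^*$, and [by Theorem \ref{p-theory}(ii)] $\|\cdot\|^*$ satisfies $(O.p'.1)$ on $V^*$. So the left-hand side of the theorem is equivalent to: $V^*$ satisfies $(O.p'.1)$ and $(OS.p'.2)$.

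It then remains to identify the condition ``$V^*$ satisfies $(O.p'.1)$ and $(OS.p'.2)$'' with ``$V^*$ is an order smooth $p'$-normed space satisfying $(OS.p'.2)$''. For this I would note that $(OS.p'.2)$ trivially implies $(O.p'.2)$: any decomposition $f=f_1-f_2$ with $((\|f_1\|^*)^{p'}+(\|f_2\|^*)^{p'})^{1/p'}\le\|f\|^*$ (with the maximum replacing the $\ell^{p'}$-sum when $p'=\infty$) already witnesses $(O.p'.2)$ for every $\epsilon>0$. Hence $(O.p'.1)\wedge(OS.p'.2)$ is the same as $(O.p'.1)\wedge(O.p'.2)\wedge(OS.p'.2)$, i.e.\ ``order smooth $p'$-normed plus $(OS.p'.2)$'', and both implications of the theorem drop out simultaneously. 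One also sees, incidentally, that the clause $(O.p'.2)$ in the right-hand side is logically redundant, being absorbed by $(OS.p'.2)$.

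The only remaining point is that the right-hand phrase be well-posed, i.e.\ that $V^{*+}$ is a proper, generating, norm-closed cone; this is the ``housekeeping'' part and presents no obstacle. Norm-closedness of $V^{*+}$ was already recorded just before Proposition \ref{a1}; $V^{*+}$ is proper because $V^+$ is generating, so any $f$ with $\pm f\in V^{*+}$ vanishes on $V^+-V^+=V$; and $V^{*+}$ is generating precisely because $(OS.p'.2)$ (part of the statement) provides the decompositions. Thus there is no substantive difficulty: all the analytic content already sits in Theorem \ref{p-theory}, and the present theorem is essentially a repackaging of it, the only thing requiring attention being the observation that $(OS.p'.2)$ subsumes $(O.p'.2)$ and the matching of the structural hypotheses on $V$ and $V^*$.
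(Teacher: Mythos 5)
Your derivation is correct: the paper itself states Theorem \ref{duality-of-p-thry} without proof, citing \cite{Karn10}, and your reduction — combining the two equivalences of Theorem \ref{p-theory}, observing that $(OS.p'.2)$ subsumes $(O.p'.2)$, and checking that $V^{*+}$ is proper, generating and closed — is exactly the intended bookkeeping behind that cited result. No gaps.
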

 In above theorem, we use $\Vert. \Vert^*$ to denote the norm of the dual Banach spaces. Rest part of the paper, we use $\Vert.\Vert$ for $\Vert.\Vert^*$.


\section{Smooth $p$-order ideals in order smooth $p$-normed spaces}
In this section, we observing the duality properties in Theorem \ref{b2} and Theorem \ref{b5}. Such duality motivate us to propose the definition of ``smooth $p$-order ideals'' in order smooth $p$-normed spaces (see e.g. Definition \ref{spoi}).
Throughout we assume that $V$ is an ordered Banach space and $W$ is a closed subspace of $V$.

The following Lemma link between quotient cone constructed from given ordered normed spaces with the dual cone constructed from duality ordered normed spaces.
  \begin{lemma}\label{b1}Let $(V,V^+,  \Vert . \Vert )$ be an order smooth $p$-normed
 space and $W$ be a subspace of $V$. Let $\varphi_{W}: V\mapsto V/W$ and $\varphi_{W^{\perp}}: V^{*} \mapsto V^{*}/W^{\perp}$ be the natural
 homomorphisms. Then we have following:
\begin{enumerate}
\item [(i)] $\{ f+W^{\perp}: f(w)\geq 0~ \forall w\in W^{+}\}=(V^*/W^{\perp})^{+}$;
\item [(ii)] $\{f \in W^{\perp}: f(v) \geq 0 ~\forall v+W\in (V/W)^+ \}=W^{\perp+}$.
\end{enumerate}
\end{lemma}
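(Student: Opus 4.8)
The plan is to prove both identities by double inclusion, reducing everything to Hahn--Banach separation. First I would record the standard isometric identifications $V^{*}/W^{\perp}\cong W^{*}$ and $W^{\perp}\cong(V/W)^{*}$. Under these, for $f\in W^{\perp}$ the scalar $f(v)$ depends only on the coset $v+W$, and for $w\in W$ the scalar $f(w)$ depends only on the coset $f+W^{\perp}$; moreover the weak-$*$ topology appearing in Definition \ref{a4}(ii) is exactly $\sigma(V^{*}/W^{\perp},W)$, so every weak-$*$ continuous linear functional on $V^{*}/W^{\perp}$ is the evaluation $g+W^{\perp}\mapsto g(w)$ for some $w\in W$. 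These identifications make both left-hand sets well defined and set up the separation arguments.

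Part (ii) is the easy one, and in fact uses neither $(O.p.1)$ nor $(O.p.2)$. For the inclusion $W^{\perp+}\subseteq\{f\in W^{\perp}:f(v)\geq0\ \forall\,v+W\in(V/W)^{+}\}$, given $f\in W^{\perp}\cap V^{*+}$ and $v+W\in(V/W)^{+}=\overline{\varphi_{W}(V^{+})}^{\Vert\cdot\Vert}$, I would pick $v_{n}\in V^{+}$ with $v_{n}+W\to v+W$ in norm; since $f$ induces a bounded functional on $V/W$ one has $f(v_{n})\to f(v)$, and each $f(v_{n})\geq0$, so $f(v)\geq0$. Conversely, if $f\in W^{\perp}$ is nonnegative at every $v+W\in(V/W)^{+}$, then since $\varphi_{W}(V^{+})\subseteq(V/W)^{+}$ it is nonnegative on $V^{+}$, i.e.\ $f\in V^{*+}$, hence $f\in W^{\perp}\cap V^{*+}=W^{\perp+}$.

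For (i), write $S=\{f+W^{\perp}:f(w)\geq0\ \forall\,w\in W^{+}\}$. Because $W^{+}\subseteq V^{+}$, each $f\in V^{*+}$ lies in $S$, so $\varphi_{W^{\perp}}(V^{*+})\subseteq S$; and $S=\bigcap_{w\in W^{+}}\{f+W^{\perp}:f(w)\geq0\}$ is an intersection of weak-$*$ closed half-spaces, hence weak-$*$ closed, which gives $(V^{*}/W^{\perp})^{+}=\overline{\varphi_{W^{\perp}}(V^{*+})}^{w^{*}}\subseteq S$. The reverse inclusion is the substance of the lemma. Assuming $f+W^{\perp}\in S\setminus(V^{*}/W^{\perp})^{+}$, the set $(V^{*}/W^{\perp})^{+}$ is a weak-$*$ closed convex cone containing $0$, so Hahn--Banach separation produces $w\in W$ and $\alpha\in\R$ with $f(w)<\alpha\leq h(w)$ for every $h\in V^{*+}$ (using $\varphi_{W^{\perp}}(V^{*+})\subseteq(V^{*}/W^{\perp})^{+}$) and $\alpha\leq0$ (taking $h=0$). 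Replacing $h$ by $\lambda h$ and letting $\lambda\to0^{+}$ and $\lambda\to\infty$ forces $h(w)\geq0$ for all $h\in V^{*+}$, i.e.\ $w\in V_{+}$; since $V^{+}$ is closed, Proposition \ref{a1}(i) gives $w\in V^{+}$, hence $w\in V^{+}\cap W=W^{+}$. But then $f+W^{\perp}\in S$ yields $f(w)\geq0$, contradicting $f(w)<\alpha\leq0$. Thus $S\subseteq(V^{*}/W^{\perp})^{+}$.

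The step I expect to be the real obstacle is precisely this reverse inclusion in (i): it hinges on identifying the weak-$*$ topology on the quotient correctly, so that the separating functional genuinely arises from an element of $W$, and then on converting the dual positivity statement ``$h(w)\geq0$ for all $h\in V^{*+}$'' back into $w\in V^{+}$ --- which is exactly where closedness of the cone and Proposition \ref{a1}(i) enter. Everything else is routine bookkeeping with the canonical identifications.
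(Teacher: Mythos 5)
Your proof is correct and follows essentially the same route as the paper: part (ii) by the norm-approximation argument on $\varphi_{W}(V^{+})$, and part (i) by showing the left-hand set is $w^{*}$-closed (w.r.t.\ the predual $W$), contains $\varphi_{W^{\perp}}(V^{*+})$, and then running Hahn--Banach separation with the separating functional realized as evaluation at some $w\in W$, converted to $w\in W^{+}$ via Proposition \ref{a1} and closedness of $V^{+}$. The only difference is cosmetic: you spell out the cone-separation scaling details and describe the $w^{*}$-closedness via an intersection of half-spaces where the paper uses a net argument.
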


\begin{proof}
Let $(V,V^+)$ be an order smooth $p$-normed space and $W$ be a subspace of $V$.
\begin{enumerate}
\item [(i)] We note that Banach dual of $W$ is $V^{*}/W^{\perp}$. We claim that $\{f+ W^{\perp}: f(w)\geq 0 ~\forall w\in W^{+} \}$ is a $w^*$-closed set. Let $\{f_{\alpha}+W^{\perp}\}$ be a net in $\{f+ W^{\perp}: 
f(w)\geq 0 ~\forall w\in W^{+} \}$ such that $f_{\alpha}+ W^{\perp}\rightarrow f+ W^{\perp}$ for some $f\in V^{*}$ in $w^*$-topology. Since $W$ is a predual of $V^{*}/W^{\perp}$, thus $f_{\alpha}(w)\rightarrow f(w)$ for all
$w\in W^{+}$. Since $f_{\alpha}(w)\geq 0$ for all $w\in W^{+}$, therefore $f(w)\geq 0$ for all $w\in W^{+}$. Hence $\{f+ W^{\perp}: f(w)\geq 0 ~\forall w\in W^{+} \}$ is a $w^*$-closed set.
We know from definition that $(V^*/W^{\perp})^{+}=\overline{\varphi_{W^{\perp}}(V^{*+})}^{w^*}$. Let $f\in V^{*+}$. Since $f(w)\geq 0$ for all $w\in W^{+}$. Thus $f+ W^{\perp}\in \{f+ W^{\perp}: 
f(w)\geq 0 ~\forall w\in W^{+} \}.$ Thus we have $\overline{\varphi_{W^{\perp}}(V^{*+})}^{w^*}\subset  \{f+ W^{\perp}: f(w)\geq 0 ~\forall w\in W^{+} \}$.   

Conversely, if possible let $f+ W^{\perp}\in \{f+ W^{\perp}: f(w)\geq 0 ~\forall w\in W^{+} \}\setminus\overline{\varphi_{W^{\perp}}(V^{*+})}^{w^*}$. Then by the Hahn Banach separation theorem, there is a $w\in W$ 
such that $f(w)<0$ and $g(w)\geq 0$ for all $g\in V^{*+}$.  Therefore $w\in V^{+}$ by the Proposition \ref{a1}. Thus $v\in W\cap V^+ =W^{+}$. Therefore $f(w)\geq 0$, which is a contradiction. Hence $\{ f+W^{\perp}: f(w)\geq 0~ \forall w\in W^{+}\}=
(V^*/W^{\perp})^{+} $.
\item [(ii)]
Let $f\in W^{\perp+}$. Then $f\in W^{\perp}$ and $f\in V^{*+}$. Thus $f(v)\geq 0$ for all $v\in V^+$ so that $f(v)\geq 0$ for all $v+ W\in \varphi_{W}(V^+)$. We claim that $f(v)\geq 0$ for all $v+ W\in 
\overline{\varphi_{W}(V^+)}^{ \Vert . \Vert }$. Let $v+ W\in \overline{\varphi_{W}(V^+)}^{ \Vert . \Vert }$. Then there is a sequence $v_{n}+W \in \varphi_{W}(V^+)$ such that $v_{n}+ W\rightarrow v+W$ in norm.
Since $f\in W^{\perp}$, we have $f(v_{n})\longrightarrow f(v)$. Since $f(v_{n})\geq 0$ for all $n\in \N$, therefore we have $f(v) \geq 0$. We know from the definition that $(V/W)^+=\overline{\varphi_{W}(V^+)}^{ \Vert . \Vert }$. Thus
$W^{\perp+}\subset \{f\in W^{\perp}:f(v)\geq 0,~ \forall v+W\in (V/W)^{+}\}$.

Conversely, let $f\in \{f\in W^{\perp}:f(v)\geq 0,~ \forall v+W\in (V/W)^{+}\}$. Now if $v\in V^{+}$, then $v+ W\in (V/W)^+$. This implies that $f(v)\geq 0$ for all $v\in V^+$ so that $f\in V^{*+}$. Therefore
$f\in V^{*+}\cap W^{\perp}=W^{\perp+}.$
 \end{enumerate}
\end{proof}
\begin{mysection}
\begin{align*}
\{f\in W^{\perp}:f(v)\geq 0,~ \forall v+W\in (V/W)^{+}\} &=\{f\in W^{\perp}:f(v)\geq 0~\forall v+W\in \overline{\phi_{W}(V^+)}^{ \Vert  . \Vert  }\}\\
&=\{f\in W^{\perp}:f(v)\geq 0~\forall v+W\in \phi_{W}(V^+)\}\\
&=\{f\in W^{\perp}:f(v)\geq 0~ \forall v\in V^{+}\}\\
&=W^{\perp}\cap V^{'+}=W^{\perp+}.
\end{align*}
\end{mysection}
 \begin{theorem}\label{b2}
Let $(V,V^+,  \Vert . \Vert )$ be an order smooth $p$-normed space,  $W$ be a subspace of $V$. Let $\varphi_{W}: V\mapsto V/W$ and $\varphi_{W^{\perp}}: V^{*} \mapsto V^{*}/W^{\perp}$ be the natural
 homomorphisms. Then we have following duality:
\begin{enumerate}
\item [(i)] $(W, W^+, \Vert  . \Vert  )$ is an order smooth $p$-normed space if and only if $$(V^{*}/W^{\perp},(V^{*}/W^{\perp})^+, \Vert  . \Vert  )$$ is an order smooth $p'$-normed space satisfying $(OS.p'.2)$.
\item  [(ii)]$(V/W, (V/W)^+, \Vert  . \Vert  )$ is an order smooth $p$-normed space if and only if $(W^{\perp}, W^{\perp+},  \\ \Vert  . \Vert )$ is an order smooth $p'$-normed space satisfying $(OS.p'.2)$.
\end{enumerate}
\end{theorem}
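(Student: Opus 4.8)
The plan is to obtain both equivalences by specializing the abstract duality of Theorem \ref{duality-of-p-thry} to the Banach spaces $W$ and $V/W$, after using Lemma \ref{b1} to identify the dual cones that occur.

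For part (i): since $W$ is a closed subspace of the Banach space $V$, it is itself a Banach space, and its Banach dual is the isometric quotient $W^{*}\cong V^{*}/W^{\perp}$, a coset $f+W^{\perp}$ acting on $W$ by $w\mapsto f(w)$. Hence the dual cone of $W^{+}$ in $W^{*}$ is exactly $\{f+W^{\perp}:f(w)\geq 0\ \forall\,w\in W^{+}\}$, which by Lemma \ref{b1}(i) equals $(V^{*}/W^{\perp})^{+}$; so $(W^{*},W^{*+},\Vert\cdot\Vert)$ and $(V^{*}/W^{\perp},(V^{*}/W^{\perp})^{+},\Vert\cdot\Vert)$ are literally the same ordered normed space. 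As $W^{+}=W\cap V^{+}$ is norm closed and proper (a subcone of the proper cone $V^{+}$), Theorem \ref{duality-of-p-thry} applied to $W$ says that $(W,W^{+},\Vert\cdot\Vert)$ is an order smooth $p$-normed space if and only if $(W^{*},W^{*+},\Vert\cdot\Vert)$ is an order smooth $p'$-normed space satisfying $(OS.p'.2)$, which is precisely the assertion about $V^{*}/W^{\perp}$.

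For part (ii): run the same argument with $V/W$ in place of $W$. Now $V/W$ is a Banach space with dual $(V/W)^{*}\cong W^{\perp}$, where $f\in W^{\perp}$ acts by $v+W\mapsto f(v)$; since passing to the closure does not change a dual cone, the dual cone of $(V/W)^{+}=\overline{\varphi_{W}(V^{+})}$ inside $W^{\perp}$ is $\{f\in W^{\perp}:f(v)\geq 0\ \forall\,v+W\in (V/W)^{+}\}$, equal to $W^{\perp+}$ by Lemma \ref{b1}(ii). Thus $((V/W)^{*},((V/W)^{*})^{+},\Vert\cdot\Vert)=(W^{\perp},W^{\perp+},\Vert\cdot\Vert)$; the cone $\overline{\varphi_{W}(V^{+})}$ is closed and generating (being the closure of the image of the generating cone $V^{+}$ under a surjection), and Theorem \ref{duality-of-p-thry} applied to $V/W$ then yields the stated equivalence.

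The one place a little care is needed is the two ``if'' directions, where one starts from the dual side and must confirm the standing hypotheses of Theorem \ref{duality-of-p-thry} for $W$ (resp. $V/W$): closedness is immediate, and properness is routine — for $W$ it is inherited, and for $V/W$ it follows because its dual cone $W^{\perp+}$ is generating, and a closed cone whose dual cone is generating must be proper (a one-line Hahn--Banach decomposition argument). The genuinely nontrivial point, and the one I expect to be the main obstacle, is showing that $W^{+}$ generates $W$ knowing only that $(V^{*}/W^{\perp})^{+}$ is an order smooth $p'$-normed cone. I would settle this via the classical normality--generating duality recalled in the introduction: $W^{*+}$ satisfies $(O.p'.1)$, hence is a normal cone, and the normality of the dual of a closed cone in a Banach space forces the cone itself to be (approximately, hence by completeness genuinely) generating. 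Once this is in hand, Theorem \ref{duality-of-p-thry} applies in every case and both equivalences follow.
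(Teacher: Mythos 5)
Your proof is correct and follows essentially the same route as the paper: identify the Banach duals $W^{*}\cong V^{*}/W^{\perp}$ and $(V/W)^{*}\cong W^{\perp}$, match the dual cones via Lemma \ref{b1}, and invoke Karn's duality (Theorem \ref{p-theory}/\ref{duality-of-p-thry}). The only difference is your explicit verification of the standing hypotheses in the ``if'' directions (properness, and the generating property of $W^{+}$ via the And\^o normality--generating duality), a point the paper's own proof passes over silently, so this is added care rather than a different approach.
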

\begin{proof} Let $W$ be a subspace of an ordered smooth $p$-normed space $(V, V^{+},  \Vert  . \Vert  )$.
\begin{enumerate}
\item [(i)]Since Banach dual of $W$ is $V^*/W^{\perp}$ and from (i) of Lemma \ref{b1}, we know that $\{ f+W^{\perp}: f(w)\geq 0~ \forall w\in W^{+}\}=(V^*/W^{\perp})^{+}$. Thus by applying Theorem \ref{p-theory} between
 $(W, W^{+},\Vert.\Vert)$ and $( V^{*}/W^{\perp}),V^{*}/W^{\perp})^+, \Vert  . \Vert )$, we conclude that 
 $(W, W^+, \Vert  . \Vert  )$ is an order smooth $p$-normed space if and only if $(V^{*}/W^{\perp},(V^{*}/W^{\perp})^+, \\ \Vert  . \Vert  )$ is an order smooth $p'$-normed space satisfying $(OS.p'.2)$.
\item [(ii)] Since Banach dual of $V/W$ is $W^{\perp}$ and  from Lemma \ref{b1}, we know that  $\{f \in W^{\perp}: f(v) \geq 0 ~\forall v+W\in (V/W)^+ \}=W^{\perp+}$. Thus by applying Theorem \ref{p-theory}
between $(V/W, (V/W)^+, \Vert  . \Vert )$ and $(W^{\perp}, W^{\perp+},  \Vert . \Vert )$, we conclude that $(V/W,(V/W)^+,  \Vert  . \Vert  )$ is an order smooth $p$-normed space if and only if $(W^{\perp}, W^{\perp+},  \Vert . \Vert )$ is 
an order smooth $p'$-normed space satisfying $(OS.p'.2)$.
\end{enumerate}
\end{proof}
\begin{proposition}\label{b3}Let $(V,V^+, \Vert . \Vert )$ be an order smooth $p$-normed space and $W$ be a subspace of $V$. Then $\varphi_{W^{\perp}}(V^{*+})= \overline{\varphi_{W^{\perp}}(V^{*+})}^{w^*}$ if and only if
 $f\in W^{*+}$ implies there is a $g\in V^{*+}$ such that $g_{|_W}=f$.
\end{proposition}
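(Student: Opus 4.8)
The plan is to transport everything to the Banach dual $W^{*}$ via the canonical isometry $V^{*}/W^{\perp}\cong W^{*}$ and then simply read off the equivalence from Lemma \ref{b1}(i). Recall that under this isometry the coset $g+W^{\perp}$ corresponds to the restriction $g_{|_W}$, so that $\varphi_{W^{\perp}}$ becomes the restriction map $g\mapsto g_{|_W}$, and that the isometry identifies the $w^{*}$-topology on $V^{*}/W^{\perp}$ (as dual of $W$) with the $w^{*}$-topology on $W^{*}$. Under this identification, Lemma \ref{b1}(i) says precisely that
$$\overline{\varphi_{W^{\perp}}(V^{*+})}^{w^{*}}=\{\psi\in W^{*}:\psi(w)\geq 0~\forall w\in W^{+}\}=W^{*+}.$$

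Next I would record the trivial inclusion $\varphi_{W^{\perp}}(V^{*+})\subseteq W^{*+}$: if $g\in V^{*+}$, then for every $w\in W^{+}=W\cap V^{+}$ we have $g(w)\geq 0$, hence $g_{|_W}\in W^{*+}$. Combining this with the previous paragraph gives the chain $\varphi_{W^{\perp}}(V^{*+})\subseteq \overline{\varphi_{W^{\perp}}(V^{*+})}^{w^{*}}=W^{*+}$, from which it follows that the equality $\varphi_{W^{\perp}}(V^{*+})=\overline{\varphi_{W^{\perp}}(V^{*+})}^{w^{*}}$ holds if and only if $W^{*+}\subseteq \varphi_{W^{\perp}}(V^{*+})$.

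Finally I would unwind this last inclusion: $W^{*+}\subseteq \varphi_{W^{\perp}}(V^{*+})$ means exactly that every $f\in W^{*+}$ can be written as $g_{|_W}$ for some $g\in V^{*+}$, i.e. every positive functional on $W$ extends to a positive functional on $V$. This is the asserted equivalence, and the proof is complete.

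I do not expect a genuine obstacle; the only point needing care is to verify that the isometry $V^{*}/W^{\perp}\cong W^{*}$ is simultaneously compatible with the quotient map and with the two weak$^{*}$-topologies, so that Lemma \ref{b1}(i) may be invoked verbatim. It is also worth noting that the order-smooth $p$-normed hypothesis on $V$ enters only through Lemma \ref{b1}(i) (which itself only uses Proposition \ref{a1} to identify $V^{+}$ with $V_{+}$); no further structure of $V$ is required for this proposition.
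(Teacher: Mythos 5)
Your proof is correct and follows essentially the same route as the paper: both rest on Lemma \ref{b1}(i) together with the identification $V^{*}/W^{\perp}\cong W^{*}$ (equivalently, Hahn--Banach extension of functionals on $W$ to $V$), which is exactly how the paper produces the extension in the forward direction and handles the $w^{*}$-convergent net in the converse. Your packaging of the two implications as the single inclusion $W^{*+}\subseteq \varphi_{W^{\perp}}(V^{*+})$ is merely a streamlining of the paper's argument, not a different method.
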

\begin{proof} Let $\varphi_{W^{\perp}}(V^{*+})= \overline{\varphi_{W^{\perp}}(V^{*+})}^{w^*}$. Let $f: W\mapsto \R$ be a bounded linear functional such that $f(w)\geq 0$ for all $w\in W^{+}$. Then by the Hahn Banach 
separation theorem, there exist a bounded linear functional $f_{1}: V\mapsto \R$ such that ${f_{1}}_{|_W}= f$ and $ \Vert  f_{1} \Vert =  \Vert  f \Vert $. Now by Lemma \ref{b1}, $f_{1}+ W^{\perp}\in \overline{\varphi_{W^{\perp}}(V^{*+})}
^{w^*}$. Thus by assumption, there is a $g\in V^{*+}$ such that $f_{1}+ W^{\perp}= g+ W^{\perp}$. Therefore we have ${f_{1}}_{|_W}= g_{|_{W}}$. 

Conversely, assume that  if $f\in W^{*+}$, then there is a $g\in V^{*+}$ such that $g_{|_W}=f$.  Now let $f+ W^{\perp}\in \overline{\varphi_{W^{\perp}}(V^{*+})}^{w^*}$. Then there exist $g_{\alpha}\in V^{*+}$ such that
$g_{\alpha}+ W^{\perp}\longrightarrow f+ W^{\perp}$ in $w^{*}$-topology which implies that $f(w)\geq 0$ for all $w\in W^{+}$.  Thus $g_{\alpha}(w)\longrightarrow f(w)$ for all $w\in W^{+}$. So by assumption, there exist a $g\in V^{*+}$ such that $g_{|W}= f_{|W}$.
Therefore $g+ W= f+ W$ so that $f+ W^{\perp}\in \varphi_{W^{\perp}}(V^{*+})$.  
\end{proof}
\begin{lemma}\label{b4} Let $(V,V^+, \Vert . \Vert )$ be an order smooth $p$-normed space and $W$ be a subspace of $V$. Then we have following:

  $$\{F+{W^{\perp\perp}: F(f) \geq 0 ~\forall f\in W^{\perp+}}\}=(V^{**}/W^{\perp\perp})^+.$$

 \end{lemma}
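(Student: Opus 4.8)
The plan is to mimic the proof of part (i) of Lemma \ref{b1}, carried out one duality level higher, with the pair $(V,W)$ replaced by $(V^{*},W^{\perp})$. First I would record that, by the Banach space isometries listed in Section 2, the Banach dual of $W^{\perp}$ is $V^{**}/W^{\perp\perp}$, so each coset $F+W^{\perp\perp}$ acts as a bounded linear functional on $W^{\perp}$ and the $w^*$-topology on $V^{**}/W^{\perp\perp}$ is exactly the one induced by this predual $W^{\perp}$. Under this identification the left-hand side of the asserted equality is precisely the dual cone of $W^{\perp+}$ inside $(W^{\perp})^{*}$, while by Definition \ref{a4}(iii) the right-hand side is $\overline{\varphi_{W^{\perp\perp}}(V^{**+})}^{w^*}$.

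Next I would check that the left-hand side is $w^*$-closed: if a net $\{F_{\alpha}+W^{\perp\perp}\}$ lying in it converges in $w^*$ to $F+W^{\perp\perp}$, then, $W^{\perp}$ being the predual, $F_{\alpha}(f)\to F(f)$ for every $f\in W^{\perp}$, in particular for every $f\in W^{\perp+}$, so $F(f)\geq 0$. For the inclusion $\overline{\varphi_{W^{\perp\perp}}(V^{**+})}^{w^*}\subseteq$ (left-hand side), note that $W^{\perp+}=W^{\perp}\cap V^{*+}\subseteq V^{*+}$, so any $F\in V^{**+}$ has $F(f)\geq 0$ for all $f\in W^{\perp+}$; hence $\varphi_{W^{\perp\perp}}(V^{**+})$ is contained in the left-hand side, and, the latter being $w^*$-closed, so is its $w^*$-closure.

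For the reverse inclusion I would argue by contradiction. Suppose $F+W^{\perp\perp}$ lies in the left-hand side but not in $\overline{\varphi_{W^{\perp\perp}}(V^{**+})}^{w^*}$; since $\varphi_{W^{\perp\perp}}(V^{**+})$ is a convex cone, its $w^*$-closure is a $w^*$-closed convex cone in $(W^{\perp})^{*}$, so the Hahn--Banach separation theorem applied in the $w^*$-topology produces a $w^*$-continuous functional, i.e.\ an element $f\in W^{\perp}$, with $F(f)<0$ while $G(f)\geq 0$ for all $G\in V^{**+}$. The latter condition says exactly that $f\in V^{*}_{+}$, and $V^{*}_{+}=V^{*+}$ by Proposition \ref{a1}(ii); therefore $f\in W^{\perp}\cap V^{*+}=W^{\perp+}$, and then $F(f)\geq 0$ by the hypothesis on $F$, a contradiction.

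The only step needing genuine care is the separation: one must separate the point from the cone by a functional coming from the predual $W^{\perp}$ rather than from the full dual of $W^{\perp}$, which is legitimate precisely because we have passed to the $w^*$-closure and, in a locally convex space, a point can be separated from a disjoint closed convex set by a continuous functional --- here $w^*$-continuous, hence an element of $W^{\perp}$. Everything else is a routine transcription of the argument for Lemma \ref{b1}(i), so I would present it briefly.
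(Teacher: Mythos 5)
Your argument is correct and follows essentially the same route as the paper's proof: the easy inclusion via $w^*$-closedness of $\{F+W^{\perp\perp}: F(f)\geq 0\ \forall f\in W^{\perp+}\}$, then Hahn--Banach separation in the $w^*$-topology using the predual $W^{\perp}$, and Proposition \ref{a1} to place the separating functional in $W^{\perp+}$, yielding the contradiction.
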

 \begin{proof}Let $(V,V^+, \Vert . \Vert )$ be an order smooth $p$-normed space and $W$ be a subspace of $V$.
We know from definition \ref{a4}, that $(V^{**}/W^{\perp\perp})^+=\overline{\varphi_{W^{\perp\perp}}(V^{**+})}^{w^*}$. 
 It is clear from the definition that $\varphi(V^{**+})\subset \{F+{W^{\perp\perp}: F(f) \geq 0 ~\forall f\in W^{\perp+}}\}.$ Since $\{F+{W^{\perp\perp}: F(f) \geq 0 ~ \\ \forall  f\in  W^{\perp+}}\}$ is a $w^*$-closed set, therefore
 we have $\overline{\varphi(V^{**+})}^{w^*}  \subset \{F+{W^{\perp\perp}:F(f) \geq 0 ~\forall f\in W^{\perp+}}\}$.

If possible, let $G+W^{\perp\perp}\in\{F+{W^{\perp\perp}: F(f) \\ \geq 0 ~\forall f\in W^{\perp+}}\}\setminus \overline{\varphi_{W^{\perp\perp}}(V^{**+})}^{w^*}$. Since $W^{\perp}$ is a predual of
${V^{**}/W^{\perp\perp}}$. Thus by the Hahn Banach separation theorem, there exist $g\in W^{\perp}$ such that 
$G(g)< 0$ and $F(g)\geq 0$ for all $F+W^{\perp\perp}\in \overline{\varphi_{W^{\perp\perp}}(V^{**+})}^{w^{**}}.$
Thus $F(g)\geq 0$ for all $F\in V^{**+}$. Therefore from Proposition \ref{a1}, we have $g\in W^{\perp+}$ so that $G(g)\geq 0$, which is a contradiction. Hence we have $\{F+{W^{\perp\perp}: F(f) \geq 0 ~\forall f\in W^{\perp+}}\}=(V^{**}/W^{\perp\perp})^+$.

 \end{proof}
\begin{theorem}\label{b5}
Let $(V,V^+, \Vert . \Vert )$ be an order smooth $p$-normed space and $W$ be a subspace of $V$. Let $\varphi_{W}: V\mapsto V/W$ and $\varphi_{W^{\perp}}: V^{*} \mapsto V^{*}/W^{\perp}$ be the natural homomorphisms. 
Then we have following duality:
\begin{enumerate}
\item [(i)] $(W^{\perp}, W^{\perp+}, \Vert  . \Vert  )$ is an order smooth $p'$-normed space if and only if $$(V^{**}/W^{\perp\perp},(V^{**}/W^{\perp\perp})^+, \Vert  . \Vert )$$ is an order smooth $p$-normed 
space satisfying $(OS.p.2)$;
\item [(ii)] if $(V^{*}/W^{\perp}, (V^{*}/W^{\perp})^+, \Vert  . \Vert  )$ is an order smooth $p'$-normed space, then $$(W^{\perp\perp}, W^{\perp\perp+},  \Vert  . \Vert )$$ is an order smooth $p$-normed space satisfying $(OS.p.2)$;
\item [(iii)] assume that $\varphi_{W^{\perp}}(V^{*+})= \overline{\varphi_{W^{\perp}}(V^{*+})}^{w^*}$. If $(W^{\perp\perp},W^{\perp\perp+},  \Vert . \Vert )$ is an order smooth $p$-normed space, then 
$(V^*/W^{\perp}, (V^{*}/W^{\perp})^+,  \Vert . \Vert )$ is an order smooth $p'$-normed space.  
\end{enumerate}
\end{theorem}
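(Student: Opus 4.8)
The plan is to derive all three parts by applying the ``first-level'' duality results already established---Theorem \ref{b2}, the axiom-duality Theorem \ref{p-theory}, and Theorem \ref{duality-of-p-thry}---either to the subspace $W^{\perp}$ of the dual space $V^{*}$ (which is order smooth $p'$-normed by Theorem \ref{duality-of-p-thry}) or to the quotient $V^{*}/W^{\perp}$. The recurring task is to match the quotient/dual cones produced by those theorems with the cones named in the statement; this is exactly what Lemma \ref{b1}, Lemma \ref{b4} and Proposition \ref{b3} are for. Throughout I use that $V^{*}/W^{\perp}$ is the Banach dual of $W$ and that $W^{\perp\perp}$ is the Banach dual of $V^{*}/W^{\perp}$.

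For (i), apply Theorem \ref{b2}(i) with $V$ replaced by $V^{*}$, $W$ by $W^{\perp}$ and $p$ by $p'$: this gives at once that $(W^{\perp},W^{\perp+},\Vert.\Vert)$ is order smooth $p'$-normed if and only if $\bigl((V^{*})^{*}/(W^{\perp})^{\perp},((V^{*})^{*}/(W^{\perp})^{\perp})^{+},\Vert.\Vert\bigr)$ is order smooth $p$-normed satisfying $(OS.p.2)$; identifying $(V^{*})^{*}=V^{**}$ and $(W^{\perp})^{\perp}=W^{\perp\perp}$, the quotient cone of Definition \ref{a4} is exactly $(V^{**}/W^{\perp\perp})^{+}$, which Lemma \ref{b4} rewrites as $\{F+W^{\perp\perp}:F(f)\ge 0\ \forall f\in W^{\perp+}\}$, so the two statements coincide verbatim. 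For (ii), apply Theorem \ref{duality-of-p-thry} to the order smooth $p'$-normed space $(V^{*}/W^{\perp},(V^{*}/W^{\perp})^{+},\Vert.\Vert)$: its Banach dual is then order smooth $p$-normed satisfying $(OS.p.2)$, carrying the dual cone of $(V^{*}/W^{\perp})^{+}$, and Lemma \ref{b1}(ii) applied to the pair $(V^{*},W^{\perp})$ identifies that dual cone with $W^{\perp\perp+}$. Only the forward implication is recorded in (ii) because the converse needs $(V^{*}/W^{\perp})^{+}$ to be proper, generating and closed a priori---standing hypotheses of Theorems \ref{p-theory} and \ref{duality-of-p-thry} that are not automatic---and supplying those is precisely the job of (iii).

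For (iii) I would argue as follows. Closedness of $(V^{*}/W^{\perp})^{+}$ is free (it is $w^{*}$-closed by Definition \ref{a4}), and it is generating because any coset in $V^{*}/W^{\perp}$ has a lift $g\in V^{*}$, which, since $V^{*}$ has generating cone, splits as $g=g_{1}-g_{2}$ with $g_{i}\in V^{*+}$, and then $\varphi_{W^{\perp}}(g_{i})\in\varphi_{W^{\perp}}(V^{*+})\subseteq(V^{*}/W^{\perp})^{+}$. The hypothesis $\varphi_{W^{\perp}}(V^{*+})=\overline{\varphi_{W^{\perp}}(V^{*+})}^{w^{*}}$ now enters through Proposition \ref{b3}: it says every positive functional on $W$ extends to a positive functional on $V$, so that $(V^{*}/W^{\perp})^{+}=\varphi_{W^{\perp}}(V^{*+})$, and this is exactly what makes the ``$\cap\,V^{**+}$'' cone $W^{\perp\perp+}$ coincide with the genuine dual cone $\{F\in(V^{*}/W^{\perp})^{*}:F|_{(V^{*}/W^{\perp})^{+}}\ge 0\}$; properness of $(V^{*}/W^{\perp})^{+}$ then drops out of the generating property of $W^{\perp\perp+}$. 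With the cones matched, I would upgrade $(O.p.2)$ to $(OS.p.2)$ on the dual space $W^{\perp\perp}$ by the usual Banach--Alaoglu plus weak-$*$ lower-semicontinuity argument (its positive cone is $w^{*}$-closed), then invoke Theorem \ref{p-theory}(i) to obtain $(O.p'.1)$ on $V^{*}/W^{\perp}$ from $(OS.p.2)$ on $W^{\perp\perp}$, and Theorem \ref{p-theory}(ii) to obtain $(O.p'.2)$ on $V^{*}/W^{\perp}$ from $(O.p.1)$ on $W^{\perp\perp}$; together with proper, generating and closed this yields that $V^{*}/W^{\perp}$ is order smooth $p'$-normed.

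The step I expect to be the main obstacle is precisely this cone bookkeeping in (iii): showing that $W^{\perp\perp+}=W^{\perp\perp}\cap V^{**+}$ is the dual cone of the $w^{*}$-closure cone $(V^{*}/W^{\perp})^{+}$. Without the hypothesis this may genuinely fail (the $w^{*}$-closure can be strictly larger than $\varphi_{W^{\perp}}(V^{*+})$, and then the two candidate cones differ), and Proposition \ref{b3} is the exact device that repairs it. A secondary point one must not skip is the self-improvement $(O.p.2)\Rightarrow(OS.p.2)$ on $W^{\perp\perp}$, since it is what feeds the ``$(OS.p.2)$'' side of Theorem \ref{p-theory}(i).
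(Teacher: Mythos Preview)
Your treatment of (i) and (iii) is essentially correct and matches the paper's approach. For (i) the paper argues directly via Lemma~\ref{b4} and Theorem~\ref{p-theory} rather than quoting Theorem~\ref{b2}, but the content is identical. For (iii) you are in fact more careful than the paper: you make explicit the self-improvement $(O.p.2)\Rightarrow(OS.p.2)$ on $W^{\perp\perp}$ (needed to feed Theorem~\ref{p-theory}(i)), which the paper glosses over, and you correctly observe that under the hypothesis the dual cone of $(V^{*}/W^{\perp})^{+}$ coincides with $W^{\perp\perp+}$.

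There is, however, a genuine gap in your argument for (ii). You invoke Lemma~\ref{b1}(ii) ``applied to the pair $(V^{*},W^{\perp})$'' to identify the dual cone of $(V^{*}/W^{\perp})^{+}$ with $W^{\perp\perp+}$. But Lemma~\ref{b1}(ii) concerns the cone $(V/W)^{+}$ of Definition~\ref{a4}(i), which is the \emph{norm} closure of $\varphi_{W}(V^{+})$; its proof uses norm continuity of $f\in W^{\perp}$ to pass to that closure. When you substitute $(V,W)\mapsto(V^{*},W^{\perp})$, the cone $(V^{*}/W^{\perp})^{+}$ appearing in Theorem~\ref{b5}(ii) is the $w^{*}$-closure of Definition~\ref{a4}(ii), and a general $F\in W^{\perp\perp}$ is not $w^{*}$-continuous on $V^{*}/W^{\perp}$ (only those coming from $W$ are). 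Consequently one can only conclude the inclusion
\[
\bigl\{F\in W^{\perp\perp}:F\ge 0\text{ on }(V^{*}/W^{\perp})^{+}\bigr\}\ \subset\ W^{\perp\perp+},
\]
and equality may fail precisely when $\varphi_{W^{\perp}}(V^{*+})\subsetneq\overline{\varphi_{W^{\perp}}(V^{*+})}^{w^{*}}$---the very obstruction you yourself isolate for (iii). The paper's remedy is to work with this one-sided inclusion: $(OS.p.2)$ for the smaller dual cone passes to $(OS.p.2)$ for the larger cone $W^{\perp\perp+}$ trivially, while $(O.p.1)$ for $(W^{\perp\perp},W^{\perp\perp+})$ is not extracted from the quotient at all---it is inherited directly from $(V^{**},V^{**+})$, since $W^{\perp\perp+}=W^{\perp\perp}\cap V^{**+}$ and $V^{**}$ is order smooth $p$-normed by two applications of Theorem~\ref{duality-of-p-thry}.
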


\begin{proof}
\begin{enumerate}
\item [(i)] Since Banach dual of $W^{\perp}$ is ${V^{**}/W^{\perp\perp}}$. We know from (i) of Lemma \ref{b4} that $\{F+{W^{\perp\perp}: F(f) \geq 0 ~\forall f\in W^{\perp+}}\}=(V^{**}/W^{\perp\perp})^+$.
Therefore from Theorem \ref{p-theory}, we conclude that $(W^{\perp}, W^{\perp+},  \Vert . \Vert )$ is an order smooth $p'$-normed space if and only if $(V^{**}/W^{\perp\perp}, (V^{**}/W^{\perp\perp})^{+},  \Vert . \Vert )$ is
an order smooth $p$-normed space satisfying $(OS.p.2)$. 
\item [(ii)] We have following cone relation on $W^{\perp\perp}$.
\begin{align}
\{F\in W^{\perp\perp}: F(f)\geq 0,~ \forall & f+W\in (V^{*}/W^{\perp})^{+}\}\\   \notag
&=\{F\in W^{\perp\perp}:F(f)\geq 0~\forall f+W^{\perp}\in \overline{\varphi_{W^{\perp}}(V^{*+})}^{w^*}\}\\ \notag
&\subset\{F\in W^{\perp\perp}:F(f)\geq 0~\forall f+W^{\perp}\in \varphi_{W^{\perp}}(V^{*+})\}\\ \notag
&= \{F\in W^{\perp\perp}:F(f)\geq 0~ \forall f\in V^{*+}\}\\ \notag
&=W^{\perp\perp}\cap V^{**+}=W^{\perp\perp+}.
\end{align} 
We know that $W^{\perp\perp+}$ is proper and closed. Since $(V^{*}/W, (V^{*}/W)^+,  \Vert . \Vert )$ is an order smooth $p'$-normed space, by Theorem \ref{p-theory}, $(W^{\perp\perp},  \Vert . \Vert )$ is 
an order smooth $p'$-normed space satisfying $(OS.p.2)$ with respect to cone $\{F\in W^{\perp\perp}: F(f)\geq 0,~ \forall f+W\in (V^{*}/W^{\perp})^{+}\}$. Since $ \{F\in W^{\perp\perp}: 
F(f)\geq 0,~ \forall f+W\in (V^{*}/W^{\perp})^{+}\} \subset W^{\perp\perp+}$ and $W^{\perp\perp+}$ is a proper closed cone, thus $(W^{\perp\perp}, W^{\perp\perp+}, \Vert . \Vert )$ is also an order smooth
 $p$-normed space satisfying $(OS.p.2)$.   
 
\item [(iii)] Let $\varphi_{W^{\perp}}(V^{*+})= \overline{\varphi_{W^{\perp}}(V^{*+})}^{w^*}$. Then from equation (i) of item $(b)$, we can easily check that $\{F\in W^{\perp\perp}: F(f)\geq 0,~ \forall f+W\in (V^{*}/W^{\perp})^{+}\}=W^{\perp\perp+}.$
Therefore if $(W^{\perp\perp}, W^{\perp\perp+}, \Vert.\Vert)$ is an order smooth $p$-normed space, then its predual $(V^*/W^{\perp}, (V^{*}/W^{\perp})^+,  \Vert . \Vert )$ is an order smooth $p'$-normed space.
\end{enumerate} 
\end{proof}

\begin{corollary}
Let $(V,V^+,  \Vert . \Vert )$ be an order smooth $p$-normed space and $W$ be an subspace of $V$. Then following are equivalents:
\begin{enumerate}

\item [(i)] Then $(W, W^{+},  \Vert . \Vert )$ is an order smooth $p$-normed space if and only if $(W^{\perp\perp}, W^{\perp\perp+}, \\ \Vert . \Vert  )$ is an order smooth $p$-normed space of $V^{**}$ satisfying $(OS.p.2)$.

\item [(ii)] If $\varphi_{W^{\perp}}(V^{*+})= \overline{\varphi_{W^{\perp}}(V^{*+})}^{w^*}$, then $(V/W, (V/W)^+,  \Vert  . \Vert  )$ is an order smooth $p$-normed space if and only if $(V^{**}/W^{\perp\perp}, ( V^{**}/W^{\perp\perp})^+,  \Vert  . \Vert  )$ is an order smooth $p$-normed space satisfying $(OS.1.2)$.
\end{enumerate}
\end{corollary}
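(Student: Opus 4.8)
The plan is to read both equivalences as instances of a single principle — ``$X$ is order smooth $p$-normed if and only if its bidual $X^{**}$ is order smooth $p$-normed and satisfies $(OS.p.2)$'' — applied once to $X=W$ (using the isometries $W^{*}\cong V^{*}/W^{\perp}$ and $W^{**}\cong W^{\perp\perp}$, together with the cone identities of Lemma \ref{b1} and Lemma \ref{b4}) and once to $X=V/W$ (using $(V/W)^{*}\cong W^{\perp}$ and $(V/W)^{**}\cong V^{**}/W^{\perp\perp}$). Rather than proving this principle in the abstract, I would simply chain together the two duality theorems already established, Theorem \ref{b2} and Theorem \ref{b5}.

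For (i), the forward implication is immediate: if $(W,W^{+},\Vert\cdot\Vert)$ is order smooth $p$-normed, then Theorem \ref{b2}(i) makes $(V^{*}/W^{\perp},(V^{*}/W^{\perp})^{+},\Vert\cdot\Vert)$ order smooth $p'$-normed (and it satisfies $(OS.p'.2)$), and feeding this into Theorem \ref{b5}(ii) makes $(W^{\perp\perp},W^{\perp\perp+},\Vert\cdot\Vert)$ order smooth $p$-normed satisfying $(OS.p.2)$. For the converse I would exploit that $W$ embeds isometrically in $W^{\perp\perp}$ with $W^{+}=W\cap W^{\perp\perp+}$ — this follows from Proposition \ref{a1}, since $W\cap V^{**+}=W\cap V^{+}$ — so that $(O.p.1)$ passes to $W$ automatically, indeed already from $W\subseteq V$; by Theorem \ref{p-theory} this forces $W^{*}\cong V^{*}/W^{\perp}$ to satisfy $(OS.p'.2)$, and then ``$W^{\perp\perp}$ order smooth $p$-normed with $(OS.p.2)$'' yields, via Theorem \ref{duality-of-p-thry}, first that $V^{*}/W^{\perp}$ is order smooth $p'$-normed and then that $W$ is order smooth $p$-normed — once the bidual cone of $(W,W^{+})$ has been identified with $W^{\perp\perp+}$.

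For (ii), under the standing hypothesis $\varphi_{W^{\perp}}(V^{*+})=\overline{\varphi_{W^{\perp}}(V^{*+})}^{w^{*}}$, I would again argue in two halves. If $(V/W,(V/W)^{+},\Vert\cdot\Vert)$ is order smooth $p$-normed, Theorem \ref{b2}(ii) gives $(W^{\perp},W^{\perp+},\Vert\cdot\Vert)$ order smooth $p'$-normed with $(OS.p'.2)$, and then Theorem \ref{b5}(i) gives $(V^{**}/W^{\perp\perp},(V^{**}/W^{\perp\perp})^{+},\Vert\cdot\Vert)$ order smooth $p$-normed with $(OS.p.2)$; here Lemma \ref{b4} identifies the bidual cone of $V/W$ with $(V^{**}/W^{\perp\perp})^{+}$ with no extra assumption, so this half is unconditional. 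Conversely, Theorem \ref{b5}(i) turns ``$V^{**}/W^{\perp\perp}$ order smooth $p$-normed with $(OS.p.2)$'' back into ``$W^{\perp}$ order smooth $p'$-normed'', and to re-apply Theorem \ref{b2}(ii) one needs the stronger statement that $W^{\perp}$ also satisfies $(OS.p'.2)$. This is exactly where the hypothesis is used: by Proposition \ref{b3} it says that every positive functional on $W$ extends to a positive functional on $V$, which is what is needed to convert the approximate positive decompositions coming from $(O.p'.2)$ in $W^{\perp}\subseteq V^{*}$ into exact ones lying in $W^{\perp+}=W^{\perp}\cap V^{*+}$.

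The genuine work is not the chaining of Theorems \ref{b2} and \ref{b5} but the cone bookkeeping that accompanies every passage between a space, its dual and its bidual: one must check that the intrinsic cones $W^{\perp\perp+}$ and $(V^{**}/W^{\perp\perp})^{+}$ agree with the bidual cones produced by the duality of Theorem \ref{p-theory}, and that the auxiliary conditions $(OS.p.2)$ and $(OS.p'.2)$ are not lost in transit. In (i) the $(OS.p'.2)$ side is taken care of by the free descent of $(O.p.1)$ to the subspace $W$; in (ii) the assumption $\varphi_{W^{\perp}}(V^{*+})=\overline{\varphi_{W^{\perp}}(V^{*+})}^{w^{*}}$ is precisely the device that bridges ``order smooth $p'$-normed'' and ``order smooth $p'$-normed satisfying $(OS.p'.2)$'' on the annihilator $W^{\perp}$, and carefully verifying that this device does its job is the step I expect to be the main obstacle.
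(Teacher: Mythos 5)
Your overall strategy --- chaining Theorem \ref{b2} with Theorem \ref{b5} via the cone identifications of Lemmas \ref{b1} and \ref{b4}, plus Karn's duality (Theorems \ref{p-theory} and \ref{duality-of-p-thry}) --- is exactly the derivation this corollary is meant to record, and your forward implications in (i) and (ii) are correct (your observation that $(OS.p'.2)$ on $V^{*}/W^{\perp}$ comes for free from $(O.p.1)$ on the subspace $W$ is also fine). The problems are in the two converses. In (i), everything hinges on your parenthetical ``once the bidual cone of $(W,W^{+})$ has been identified with $W^{\perp\perp+}$'', and that identification is not bookkeeping: the dual cone of $(V^{*}/W^{\perp},(V^{*}/W^{\perp})^{+})$ inside $W^{\perp\perp}$ is $\{F\in W^{\perp\perp}: F(f)\geq 0\ \forall\, f+W^{\perp}\in \overline{\varphi_{W^{\perp}}(V^{*+})}^{w^{*}}\}$, which in general is only \emph{contained} in $W^{\perp\perp+}$ (this is precisely the computation in the proof of Theorem \ref{b5}(ii)), because an element of $W^{\perp\perp}$ need not be $\sigma(V^{*}/W^{\perp},W)$-continuous, so its positivity on $\varphi_{W^{\perp}}(V^{*+})$ does not pass to the $w^{*}$-closure. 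Equality is obtained in the paper only under $\varphi_{W^{\perp}}(V^{*+})=\overline{\varphi_{W^{\perp}}(V^{*+})}^{w^{*}}$, which is exactly why Theorem \ref{b5}(iii) carries that hypothesis. So your converse of (i) does not follow from the paper's results as an unconditional statement; your closing remark that (i) is the ``free'' half and (ii) the conditional one is, if anything, the wrong way around.

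In (ii) you correctly isolate the missing step in the converse: after Theorem \ref{b5}(i) gives that $(W^{\perp},W^{\perp+},\Vert.\Vert)$ is order smooth $p'$-normed, one must upgrade this to ``satisfying $(OS.p'.2)$'' before Theorem \ref{b2}(ii) can be applied backwards. But the mechanism you propose does not do that job: Proposition \ref{b3} and the hypothesis $\varphi_{W^{\perp}}(V^{*+})=\overline{\varphi_{W^{\perp}}(V^{*+})}^{w^{*}}$ concern positive extension of functionals on $W$, i.e.\ the pairing of $W$ with $V^{*}/W^{\perp}$; they say nothing about decomposing an element of $W^{\perp}$ (a functional vanishing on $W$) as a difference of elements of $W^{\perp+}$ with norm control. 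The upgrade that actually works is a $w^{*}$-compactness argument: $W^{\perp}$ and $W^{\perp+}=W^{\perp}\cap V^{*+}$ are $w^{*}$-closed and the dual norm is $w^{*}$-lower semicontinuous, so applying $(O.p'.2)$ with $\epsilon=1/n$ and passing to a $w^{*}$-convergent subnet of the bounded approximate decompositions produces an exact decomposition, turning $(O.p'.2)$ into $(OS.p'.2)$ on $W^{\perp}$ independently of the stated hypothesis. As written, then, your converse of (ii) rests on an inapplicable lemma, and your converse of (i) rests on an unproved cone identification which is exactly the point where the paper's extra hypothesis is needed.
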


\begin{definition}\label{spoi}
If $(V,V^+,  \Vert  . \Vert  )$ is an order smooth $p$-normed space. Then a subspace $W$ is called \emph{smooth $p$-order ideal} in $V$ if $W$ satisfies the following conditions:
\begin{enumerate}
\item [(i)] $\varphi_{W^{\perp}}(V^{*+})= \overline{\varphi_{W^{\perp}}(V^{*+})}^{w^*};$
\item [(ii)] $(W, W^+, \Vert . \Vert )$ is an order smooth $p$-normed space;
\item [(iii)] $(V/W, (V/W)^+,  \Vert  . \Vert  )$ is an order smooth $p$-normed space.
\end{enumerate}
\end{definition}
\begin{remark} It is immediate from the definition \ref{spoi} that If $W$ is a smooth $p$-order ideal, then $W, W^{\perp}, W^{\perp\perp}$ are order ideals. 
\end{remark}

\subsection{ Smooth $\infty$-order ideals}

In this subsection, we see that the definition of smooth $\infty$-order ideals can be redundant (see e.g.  Corollary \ref{c2}) 
\begin{theorem}\cite[Theorem 2.5]{AA18}\label{c1} Let $(V,V^+, \Vert . \Vert )$ be an order smooth $\infty$-normed space and $W$ be an order smooth $\infty$-normed space. If 
 $f\in W^{*+}$, then there is a $g\in V^{*+}$ such that $g_{|_W}=f$.
\end{theorem}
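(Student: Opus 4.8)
The plan is to produce $g$ as a positive Hahn--Banach extension of $f$. After normalising we may assume $\Vert f\Vert\le 1$. Consider the functional $p\colon V\to\R$,
\[p(v)=\inf\{\,\Vert u\Vert : u\in V^{+},\ u\ge v\,\}.\]
This is finite: by $(O.\infty.2)$ every $v\in V$ can be written $v=v_{1}-v_{2}$ with $v_{i}\in V^{+}$ and $\Vert v_{1}\Vert$ as close to $\Vert v\Vert$ as we like, and then $v\le v_{1}\in V^{+}$, so $p(v)\le\Vert v\Vert$. One checks at once that $p$ is positively homogeneous and (by adding majorants) subadditive, and that $p(-v)\le\Vert 0\Vert=0$ for every $v\in V^{+}$, since $0\in V^{+}$ and $0\ge -v$. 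Hence every linear functional $g$ on $V$ with $g\le p$ is automatically bounded with $\Vert g\Vert\le 1$ (from $g(v)\le p(v)\le\Vert v\Vert$ and $-g(v)=g(-v)\le p(-v)\le\Vert v\Vert$) and positive (from $g(v)=-g(-v)\ge -p(-v)\ge 0$ when $v\in V^{+}$). By the Hahn--Banach theorem applied to $f$ and $p$, the whole statement therefore reduces to the inequality $f(w)\le p(w)$ for $w\in W$, that is, to
\[f(w)\le\Vert u\Vert\qquad\text{whenever } w\in W,\ u\in V^{+},\ u\ge w.\]

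To attack this, fix such a $w$ and $u$ (we may assume $u\ne 0$) and $\epsilon>0$. Since $W$ is an order smooth $\infty$-normed space, $(O.\infty.2)$ holds in $W$, so $w=w_{1}-w_{2}$ with $w_{1},w_{2}\in W^{+}$ and $\max\{\Vert w_{1}\Vert,\Vert w_{2}\Vert\}<\Vert w\Vert+\epsilon$. Positivity of $f$ gives $f(w)=f(w_{1})-f(w_{2})\le f(w_{1})\le\Vert w_{1}\Vert$. On the other hand $0\le w_{1}=w+w_{2}\le u+w_{2}$, so $(O.\infty.1)$ in $V$ yields $\Vert w_{1}\Vert\le\Vert u+w_{2}\Vert\le\Vert u\Vert+\Vert w_{2}\Vert$. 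Letting $\epsilon\to 0$ this already gives $f(w)\le\Vert u\Vert+\Vert w\Vert$, and in particular $f(w)\le\Vert u\Vert$ as soon as $\Vert w\Vert\le\Vert u\Vert$.

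The remaining case $\Vert w\Vert>\Vert u\Vert$ --- equivalently, eliminating the spurious additive term $\Vert w\Vert$ above --- is the heart of the matter, and here I would move to the dual/bidual picture, where the \emph{sharp} decomposition axioms are at our disposal: by Theorem \ref{p-theory} and Theorem \ref{b2} both $V^{*}$ and $W^{*}=V^{*}/W^{\perp}$ satisfy $(OS.1.2)$, and $V^{**}$ satisfies $(OS.\infty.2)$. Lifting $f$ to $\tilde f\in V^{*}$ with $\Vert\tilde f\Vert=\Vert f\Vert$ and splitting $\tilde f=g_{1}-g_{2}$ through $(OS.1.2)$ in $V^{*}$ with $\Vert g_{1}\Vert+\Vert g_{2}\Vert\le\Vert f\Vert$, one gets $f(w)=g_{1}(w)-g_{2}(w)$ with $g_{1}(w)\le g_{1}(u)\le\Vert u\Vert$; what remains is to make $g_{2}(w)$ negligible, which I would do either by iterating the decomposition (so that the error contracts geometrically) or by transporting the inequality into $V^{**}$, where the hypothesis $w\le u$ turns into the \emph{exact} decomposition $\widehat w=\widehat u-(\widehat u-\widehat w)$ into positive summands with $\Vert\widehat u\Vert=\Vert u\Vert$. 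A cleaner equivalent formulation of the same content is to show that the cone $\varphi_{W^{\perp}}(V^{*+})$ is $w^{*}$-closed: by the Krein--\v{S}mulian theorem it is enough to verify that $\varphi_{W^{\perp}}(V^{*+})\cap B_{W^{*}}$ equals $\varphi_{W^{\perp}}\!\big(V^{*+}\cap C\,B_{V^{*}}\big)\cap B_{W^{*}}$ for some constant $C$, the latter being a $w^{*}$-continuous image of a $w^{*}$-compact set; together with Definition \ref{a4} and Proposition \ref{a1} this gives $f\in\varphi_{W^{\perp}}(V^{*+})$, which is precisely the existence of a positive $g$ with $g|_{W}=f$.

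The main obstacle is this norm-controlled positive lifting: upgrading the estimate $f(w)\le\Vert u\Vert+\Vert w\Vert$ to $f(w)\le C\Vert u\Vert$, or, in dual language, producing a positive extension $g$ of $f$ with $\Vert g\Vert\le C\Vert f\Vert$. The bare order axioms $(O.\infty.1)$ and $(O.\infty.2)$ only control the estimate up to the unwanted additive $\Vert w\Vert$-term; closing the gap is exactly where the hypothesis that $W$ is order smooth $\infty$-normed --- and the corresponding \emph{sharp} conditions $(OS.1.2)$ and $(OS.\infty.2)$ in the dual and bidual --- must genuinely be used.
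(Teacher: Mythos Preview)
The paper does not prove this theorem: it is quoted as \cite[Theorem~2.5]{AA18} and used as a black box (for Corollary~\ref{c2} and for $(i)\Rightarrow(ii)$ in Theorem~\ref{d9}). So there is no proof here to compare against; I can only assess your proposal on its own.

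Your proposal is not a proof but a plan with an explicitly acknowledged gap, and the gap is genuine. The sublinear-functional route requires $f(w)\le\Vert u\Vert$ whenever $u\in V^{+}$ and $u\ge w$; you only reach $f(w)\le\Vert u\Vert+\Vert w\Vert$, and none of the three escape routes you list (iteration, bidual, Krein--\v{S}mulian) is actually executed. As written, nothing has been proved.

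What you are missing is that the second route you sketch --- lift $f$ to $\tilde f\in V^{*}$ with $\Vert\tilde f\Vert=\Vert f\Vert$ and split $\tilde f=g_{1}-g_{2}$ via $(OS.1.2)$ in $V^{*}$ --- already finishes in one step, without iteration or the bidual. Since $W$ satisfies $(O.\infty.2)$, for every $\epsilon>0$ there is $w_{\epsilon}\in W^{+}$ with $\Vert w_{\epsilon}\Vert\le 1$ and $f(w_{\epsilon})>\Vert f\Vert-\epsilon$; this is exactly the computation the paper gives for $(i)\Rightarrow(iv)$ in Theorem~\ref{d9}, and it uses only $(O.\infty.2)$ in $W$, not the $M$-ideal hypothesis. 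Then $g_{2}\in V^{*+}$ gives $g_{2}(w_{\epsilon})\ge 0$, so
\[
\Vert f\Vert-\epsilon<f(w_{\epsilon})=g_{1}(w_{\epsilon})-g_{2}(w_{\epsilon})\le g_{1}(w_{\epsilon})\le\Vert g_{1}\Vert.
\]
Hence $\Vert g_{1}\Vert\ge\Vert f\Vert=\Vert g_{1}\Vert+\Vert g_{2}\Vert$, forcing $g_{2}=0$ and $\tilde f=g_{1}\in V^{*+}$. The single idea you are lacking is that $(O.\infty.2)$ in $W$ lets the norm of $f$ be \emph{witnessed on} $W^{+}\cap W_{1}$, which immediately kills the negative part of any norm-preserving extension; there is no need to control $g_{2}(w)$ term-by-term or to pass to $V^{**}$.
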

 \begin{corollary}\label{c2}
      Let $(V, V^+,  \Vert . \Vert )$ be an order smooth  $\infty$-normed space and $W$ be a subspace of $V$. Then $W$ is an smooth $\infty$-order ideal if and only if $W$ satisfies following conditions:
      \begin{enumerate}
      \item [(i)]$(W, W^+, \Vert . \Vert )$ is an order smooth $\infty$-normed space;
      \item [(ii)]$(V/W, (V/W)^+,  \Vert  . \Vert  )$ is an order smooth $\infty$-normed space.
      \end{enumerate}
       \end{corollary}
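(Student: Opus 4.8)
The plan is to recognise that this Corollary is just a restatement of Definition \ref{spoi} in which the approximation hypothesis (i) becomes automatic in the $p=\infty$ case, the key input being the positive extension result quoted as Theorem \ref{c1}. The ``only if'' direction requires nothing: if $W$ is a smooth $\infty$-order ideal, then by parts (ii) and (iii) of Definition \ref{spoi} the spaces $(W,W^+,\Vert.\Vert)$ and $(V/W,(V/W)^+,\Vert.\Vert)$ are order smooth $\infty$-normed, which are precisely conditions (i) and (ii) of the statement.

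For the ``if'' direction I would argue as follows. Assume $(W,W^+,\Vert.\Vert)$ and $(V/W,(V/W)^+,\Vert.\Vert)$ are order smooth $\infty$-normed spaces. Since conditions (ii) and (iii) of Definition \ref{spoi} are then satisfied, it suffices to verify condition (i), namely $\varphi_{W^{\perp}}(V^{*+})=\overline{\varphi_{W^{\perp}}(V^{*+})}^{w^*}$. By Proposition \ref{b3}, this equality is equivalent to the positive extension property: every $f\in W^{*+}$ (recall $W^*\cong V^*/W^{\perp}$) is the restriction to $W$ of some $g\in V^{*+}$. But $W$ is, by hypothesis, an order smooth $\infty$-normed space sitting inside the order smooth $\infty$-normed space $V$, so Theorem \ref{c1} (i.e.\ \cite[Theorem 2.5]{AA18}) supplies exactly such a positive extension $g$. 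Hence condition (i) of Definition \ref{spoi} holds, and together with the two standing hypotheses this shows that $W$ is a smooth $\infty$-order ideal.

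The only point needing a moment's care is that the hypotheses of Proposition \ref{b3} and Theorem \ref{c1} are genuinely available: $W$ is a closed subspace (assumed throughout this section), and asserting that $(W,W^+,\Vert.\Vert)$ is an order smooth $\infty$-normed space already builds in that $W^+$ is proper, generating and closed, so the cited statements apply verbatim. There is no real obstacle here; the entire content of the Corollary is transported from Theorem \ref{c1}, and what is being recorded is simply that for $p=\infty$ the separation/approximation condition \ref{spoi}(i) is redundant, being forced by the order smoothness of $W$ itself.
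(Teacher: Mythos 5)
Your proposal is correct and follows exactly the route the paper intends: the corollary is stated as an immediate consequence of Theorem \ref{c1} combined with Proposition \ref{b3}, which is precisely your argument (the ``only if'' direction being trivial from Definition \ref{spoi}). Your added remark checking that the hypotheses of Theorem \ref{c1} and Proposition \ref{b3} are available is a sensible precaution and consistent with the paper's standing assumptions.
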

       The condition (ii) of Corollary \ref{c2} has  following equivalent relations:
       
\begin{theorem}\label{c3}
 Let $(V, V^+,  \Vert . \Vert )$ be an order smooth  $\infty$-normed space and $W$ be a subspace of $V$. Then following are equivalent: 
\begin{enumerate}
 \item [(i)] $((V/W), (V/W)^+,  \Vert  . \Vert )$ is an order smooth $\infty$-normed space;
   \item [(ii)] $(W^{\perp}, W^{\perp+},  \Vert  . \Vert  )$ \textrm{~satisfying~}  $(OS.1.2)$;
      \item [(iii)] $ \Vert  v+W \Vert  =\sup \{|f(v)|:f\in (W^{\perp})_{1}\cap W^{\perp+}\}$;
          \item [(iv)] $ \Vert  F+W^{\perp\perp} \Vert  =\sup \{|F(f)|:f\in (W^{\perp})_{1}\cap W^{\perp+}\}$;
              \item [(v)] $((V^{**}/W^{\perp\perp}), (V^{**}/W^{\perp\perp})^+,  \Vert  . \Vert  )$ is an order smooth $\infty$-normed space.
       \end{enumerate}
     \end{theorem}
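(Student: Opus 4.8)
The five conditions fall into a duality pattern: (i) and (ii) are dual via Theorem~\ref{b2}(ii) with $p=\infty$ (so $p'=1$), (v) is the second-dual analogue of (i) linked to (ii) through Theorem~\ref{b5}(i), and (iii), (iv) are ``norm-recovery'' reformulations of (i) and (v). The plan is to run the cycle (i)$\Rightarrow$(ii)$\Rightarrow$(iv)$\Rightarrow$(iii)$\Rightarrow$(i) --- noting that (ii)$\Rightarrow$(iii) is literally the same computation as (ii)$\Rightarrow$(iv) --- and then to prove (ii)$\Leftrightarrow$(v) separately.

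Before that I would clear away the bookkeeping about cones. If $(V/W)^+$ is not proper then each of (i)--(v) fails: a nonzero $x\in(V/W)^+\cap-(V/W)^+$ (also viewed in $V^{**}/W^{\perp\perp}$) is annihilated by every $f\in(W^\perp)_1\cap W^{\perp+}$, since such $f$ are $\geq 0$ on $(V/W)^+$ by Lemma~\ref{b1}(ii), so the suprema in (iii) and (iv) vanish although $\|x\|\neq 0$, while (i) and (v) build properness into ``order smooth $\infty$-normed''. So we may assume the quotient cones are proper; they are automatically generating and norm-closed, and hence Theorems~\ref{p-theory}, \ref{b2} and \ref{b5} apply to the pairs $(V/W,\,W^\perp)$ and $(W^\perp,\,V^{**}/W^{\perp\perp})$, the dual cone of $(V/W)^{+}$ being $W^{\perp+}$ by Lemma~\ref{b1}(ii) and the dual cone of $W^{\perp+}$ being $(V^{**}/W^{\perp\perp})^{+}$ by Lemma~\ref{b4}. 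I would also record two facts used throughout. \emph{(a)} The quotient $V/W$ --- and likewise $V^{**}/W^{\perp\perp}$, being a quotient of the order smooth $\infty$-normed space $V^{**}$ --- automatically satisfies $(O.\infty.2)$: given $v+W$ and $\epsilon>0$, pick a representative $v'$ with $\|v'\|<\|v+W\|+\epsilon/2$, split $v'$ via $(O.\infty.2)$ in $V$, and project; thus ``order smooth $\infty$-normed'' reduces to $(O.\infty.1)$, and dually, by Theorem~\ref{p-theory}(ii), $W^\perp$ always satisfies $(O.1.1)$. \emph{(b)} On a dual Banach space with $w^*$-closed positive cone the approximate axiom $(O.p.2)$ improves to the exact axiom $(OS.p.2)$: take positive decompositions with norm sums $<\|x\|+1/n$, extract a $w^*$-convergent subnet, and use $w^*$-closedness of the cone together with $w^*$-lower semicontinuity of the norm. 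This applies to $W^\perp=(V/W)^*$ and to $V^{**}/W^{\perp\perp}=(W^\perp)^*$.

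Now the main steps. (i)$\Rightarrow$(ii) is immediate from Theorem~\ref{b2}(ii). For (ii)$\Rightarrow$(iv) (and, with $V$ in place of $V^{**}$, (ii)$\Rightarrow$(iii)), identify $V^{**}/W^{\perp\perp}$ with $(W^\perp)^*$, so that $\|F+W^{\perp\perp}\|=\sup\{|F(f)|:f\in(W^\perp)_1\}$; given such $f$, use $(OS.1.2)$ on $W^\perp$ to write $f=f_1-f_2$ with $f_i\in W^{\perp+}$ and $\|f_1\|+\|f_2\|\leq\|f\|\leq 1$, normalise the $f_i$ into $(W^\perp)_1\cap W^{\perp+}$, and estimate $|F(f)|\leq|F(f_1)|+|F(f_2)|\leq\alpha(\|f_1\|+\|f_2\|)\leq\alpha$, where $\alpha$ denotes the right-hand side of (iv); the supremum over $f$ then gives $\|F+W^{\perp\perp}\|\leq\alpha$, and the reverse inequality is trivial. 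For (iv)$\Rightarrow$(iii), apply (iv) to the canonical isometric image in $V^{**}/W^{\perp\perp}$ of an element $v+W$, on which each $f\in W^\perp$ acts by $f(v)$. For (iii)$\Rightarrow$(i), I would check $(O.\infty.1)$ on $V/W$ directly: if $u+W\leq v+W\leq w+W$, then each $f\in(W^\perp)_1\cap W^{\perp+}$ satisfies $f(u)\leq f(v)\leq f(w)$ (again such $f$ are $\geq 0$ on $(V/W)^+$), so $|f(v)|\leq\max\{|f(u)|,|f(w)|\}\leq\max\{\|u+W\|,\|w+W\|\}$; taking the supremum over $f$ and using (iii) yields $\|v+W\|\leq\max\{\|u+W\|,\|w+W\|\}$, and together with the automatic $(O.\infty.2)$ this gives (i).

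It remains to prove (ii)$\Leftrightarrow$(v). From (ii), $W^\perp$ satisfies $(OS.1.2)$; adjoining the automatic $(O.1.1)$ of fact~(a) and the generation of $W^{\perp+}$ supplied by $(OS.1.2)$, $W^\perp$ is an order smooth $1$-normed space satisfying $(OS.1.2)$, so Theorem~\ref{b5}(i) gives that $V^{**}/W^{\perp\perp}$ is order smooth $\infty$-normed, i.e., (v). Conversely, if (v) holds then $V^{**}/W^{\perp\perp}$ is order smooth $\infty$-normed and hence, by fact~(b), also satisfies $(OS.\infty.2)$, so Theorem~\ref{b5}(i) forces $W^\perp$ to be order smooth $1$-normed; in particular $W^\perp$ satisfies $(O.1.2)$, which by fact~(b) again improves to $(OS.1.2)$, i.e., (ii). I expect the crux to be exactly this reconciliation of the approximate axioms $(O.p.2)$ --- all that the duality theorems yield when applied to the quotients $V/W$, $W^\perp$ and $V^{**}/W^{\perp\perp}$ --- with the exact axioms $(OS.p.2)$ demanded by Definition~\ref{spoi} and by Theorem~\ref{b5}, together with the parallel need to know the quotient cones are proper; both are handled by $w^*$-compactness of balls in a dual Banach space.
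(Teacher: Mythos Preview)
Your proof is correct and follows essentially the same route as the paper: the equivalences (i)$\Leftrightarrow$(ii) and (ii)$\Leftrightarrow$(v) are read off from Theorems~\ref{b2}(ii) and~\ref{b5}(i); (ii)$\Rightarrow$(iii) and (ii)$\Rightarrow$(iv) are the same decomposition/convex-hull computation; (iv)$\Rightarrow$(iii) is the canonical embedding; and your (iii)$\Rightarrow$(i) is exactly the paper's argument labelled ``(iii)$\Rightarrow$(ii)'', namely verifying $(O.\infty.1)$ on $V/W$ directly from monotonicity of positive functionals and then invoking duality. Your write-up is, if anything, more scrupulous than the paper's on two points the paper leaves implicit --- the properness of the quotient cones and the upgrade from $(O.p.2)$ to $(OS.p.2)$ on dual spaces via $w^*$-compactness --- so nothing needs to be added.
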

\begin{proof} It is clear that (i), (ii) and (iii) are equivalent and (iv) implies (iii). Thus it is sufficient to prove that $(ii)\implies (iii)$ and  $(iii)\implies (ii)$ and $(ii)\implies (iv)$. 

\item [(ii)$\implies$ (iii)]  Let $v\in V$, then we have
 \begin{align*}
 \Vert  v + W \Vert &= \sup\{|f(v)| : f\in (W^\perp)_{1}\}\\
&= \sup\{|f(v)| : f \in \co((W^{\perp+}\cap \ (W^\perp)_{1}\cup-(W^{\perp+}\cap \ (W^\perp)_{1})\}[W^{\perp} \textrm{~has~} (OS.1.2) ]\\
&= \sup\{|f(v)| : f \in (W^\perp)_{1} \cap W^{\perp+}\}.
 \end{align*}
 
\item [(iii)$\implies$ (ii)] Let $u_{1}+ W\leq u_{2}+ W$ in $((V/W), (V/W)^+,  \Vert  . \Vert  )$. By definition of $(V/W)^+$, there exist a sequence 
$\{v_{n}\}$ in $V^+$ such that $v_{n}+ W \longrightarrow (u_{2}- u_{1})+ W$ in norm topology. Now let $f\in W^{\perp+}$, then $f(v_{n}) \longrightarrow f(u_{2}- u_{1}).$ Since $f(v_{n}) \geq 0$ for each $n$, 
therefore we have $f(u_{1}) \leq f(u_{2})$.

Now, let $v_{1}+ W \leq v_{2}+ w\leq v_{3} +W$ in $((V/W), (V/W)^+,  \Vert  . \Vert )$. Then  for any $f\in W^{\perp+} \cap (W^{\perp})_{1}$, we have
$f(v_{1}) \leq f(v_{2}) \leq f(v_{3})$. Hence by assumption, we have
$\Vert v_{2} + W\Vert \leq \max\{ \Vert v_{1} + W\Vert , \Vert v_{3} + W\Vert\}$. Since $((V/W), (V/W)^+,  \Vert  .\Vert )$ has $(O.\infty.1)$ property,  thus $(W^{\perp} ,W^{\perp+}, \Vert.\Vert)$ has $(OS.1.2)$ property.
  
\item [(ii)$\implies$ (iv)]  Let $F\in V^{**}$, then we have
 \begin{align*}
 \Vert  F + W \Vert &= \sup\{|F(f)| : f\in (W^\perp)_{1}\}\\
&= \sup\{|F(f)| : f \in \co((W^{\perp+}\cap \ (W^\perp))_{1}\cup-(W^{\perp+}\cap \ (W^\perp)_{1})\}[W^{\perp} \textrm{~has~} (OS.1.2) ]\\
&= \sup\{|F(f)| : f \in (W^\perp)_{1} \cap W^{\perp+}\}.
 \end{align*}

\end{proof}

\section{$M$-ideals and  smooth $\infty$-order ideals}
Let $W$ be a closed subspace of a Banach space $V$. We call $W$  an \emph{$L$-summand} if there exists a (unique) subspace $W^{'}$ of $V$ such that 
$V=W\oplus_{1}W^{'}.$
We call $W$ an \emph{$M$-ideal} in $V$ if  $W^{\perp}$ is an $L$-summand in $V^*$. In other word,
$V^{*}= W^{\perp} \oplus_{1} W^{\perp'}$.
\begin{theorem}\label{d11}
Let $(V, V^+, \Vert.\Vert)$ be an order smooth $1$-normed space, satisfying $(OS.1.2)$ and $W$ be a subspace of  $V$. If $W$ is an $L$-summand, then $W$ is an smooth $1$-order ideal in $V$. 
\end{theorem}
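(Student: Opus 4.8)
The plan is to verify the three conditions in Definition \ref{spoi} for $p=1$, and the whole argument rests on one structural fact: \emph{the $L$-projection attached to $W$ is positive}. Write $V = W\oplus_1 W'$ and let $P\colon V\to W$ be the associated $L$-projection, so $\|P\|\le 1$ and $\|v\| = \|Pv\| + \|(I-P)v\|$ for every $v\in V$. First I would show $P(V^+)\subseteq W^+$ (and, symmetrically, $(I-P)(V^+)\subseteq W'^+$). Fix $v\in V^+$. Using $(OS.1.2)$ in $V$, write $Pv = a_1 - a_2$ and $(I-P)v = b_1 - b_2$ with $a_i,b_i\in V^+$; since always $\|x\|\le\|x_1\|+\|x_2\|$ when $x=x_1-x_2$, the $(OS.1.2)$ inequalities are in fact equalities $\|a_1\|+\|a_2\|=\|Pv\|$ and $\|b_1\|+\|b_2\|=\|(I-P)v\|$. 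Then $v = (a_1+b_1)-(a_2+b_2)$ with $a_1+b_1,\ a_2+b_2\in V^+$, and combining $\|Pv\|+\|(I-P)v\|=\|v\|$ with the triangle inequality gives $\|a_1+b_1\|+\|a_2+b_2\| = \|v\|$. Now I would invoke the elementary consequence of $(O.1.1)$: if $v\in V^+$ and $v=x-y$ with $x,y\in V^+$ and $\|x\|+\|y\|=\|v\|$, then from $0\le v\le x$ one gets $\|v\|\le\|x\|$, hence $\|y\|=0$ and $y=0$. Applied here, $a_2+b_2=0$, so $a_2=b_2=0$ because $V^+$ is proper; thus $Pv=a_1\in V^+$ and $(I-P)v=b_1\in V^+$. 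Since $Pv\in W$ and $(I-P)v\in W'$, this yields $Pv\in W^+$ and $(I-P)v\in W'^+$, and in fact $P(V^+)=W^+$, $(I-P)(V^+)=W'^+$.

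Granting positivity of $P$, condition (i) of Definition \ref{spoi} is immediate: by Proposition \ref{b3} it suffices to extend each $f\in W^{*+}$ to some $g\in V^{*+}$, and $g:=f\circ P$ does this, since $g|_{W}=f$ and $g(v)=f(Pv)\ge 0$ for $v\in V^+$. For condition (ii) I would check that $(W,W^+,\|\cdot\|)$ is an order smooth $1$-normed space: $W^+ = W\cap V^+$ is proper and norm closed; it is generating, and in fact $W$ inherits $(OS.1.2)$, because an $(OS.1.2)$-decomposition $w=u_1-u_2$ in $V$ pushes forward under $P$ to $w=Pu_1-Pu_2$ with $Pu_i\in W^+$ and $\|Pu_1\|+\|Pu_2\|\le\|u_1\|+\|u_2\|=\|w\|$; and $(O.1.1)$ passes to the subspace order automatically. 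The same reasoning applied to the complementary $L$-summand $W'$ (whose $L$-projection is $I-P$) shows $(W',W'^+,\|\cdot\|)$ is an order smooth $1$-normed space as well. For condition (iii) I would identify $V/W$ with $W'$ as an ordered normed space: $\varphi_W(v)$ is represented by $(I-P)v\in W'$, so $\varphi_W(V^+)=(I-P)(V^+)=W'^+$, which is already norm closed; hence $(V/W)^+=\varphi_W(V^+)$ by Definition \ref{a4}, and the isometry $V/W\cong W'$ carries $(V/W)^+$ onto $W'^+$, i.e.\ it is an order isomorphism. Therefore $(V/W,(V/W)^+,\|\cdot\|)$ is order smooth $1$-normed, completing the verification.

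The only step that is not pure bookkeeping is the positivity of the $L$-projection, and the one nontrivial ingredient it uses, beyond the defining $L$-summand identity, is the ``$(O.1.1)$ forces $y=0$'' observation together with $(OS.1.2)$ in $V$; everything afterwards is transporting $(O.1.1)$ and $(OS.1.2)$ along $P$, invoking Proposition \ref{b3}, and recognising $V/W$ as the complementary summand $W'$. (Alternatively, conditions (ii) and (iii) can be obtained from Theorem \ref{b2} together with the duality Theorem \ref{duality-of-p-thry}, noting that $W^\perp$ is an $M$-summand of the order smooth $\infty$-normed space $V^*$; but the direct route above is shorter and self-contained.)
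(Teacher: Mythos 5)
Your argument is correct, and it reaches the same three conditions of Definition \ref{spoi} by the same overall scheme as the paper (positivity of the $L$-projection $P$, extension of $f\in W^{*+}$ by $f\circ P$ via Proposition \ref{b3}, and the isometric order isomorphism $V/W\cong W'$), but the engine driving it is genuinely different. The paper obtains the two key facts --- that $W$ inherits $(OS.1.2)$ and that $L$-projections are positive --- from the Alfsen--Effros facial-cone machinery: Lemma \ref{pro-of-facl-cne} gives $u,-v\in C(w)$ for a norm-additive decomposition, Lemma \ref{c4} shows $C(w)\subseteq W$ for an $L$-summand, and Lemma \ref{c5} (which in turn invokes $C(u)\subseteq V^+$ from an external reference) gives positivity of the projection. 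You instead prove positivity of $P$ directly: decompose $Pv$ and $(I-P)v$ by $(OS.1.2)$, note the decompositions are norm-additive, recombine, and use the elementary $(O.1.1)$ observation that $v\in V^+$, $v=x-y$ with $x,y\in V^+$ and $\Vert x\Vert+\Vert y\Vert=\Vert v\Vert$ forces $y=0$ (properness of $V^+$ then kills both negative parts); and you get $(OS.1.2)$ for $W$ by pushing a decomposition of $w\in W$ through the positive contraction $P$ rather than through facial cones. Your treatment of condition (iii) is also slightly cleaner, since $(I-P)(V^+)=W'^{+}$ shows at once that $\varphi_W(V^+)$ is already closed, whereas the paper argues closedness by a sequence argument. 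What your route buys is a self-contained, more elementary proof avoiding Lemma \ref{c4}, Lemma \ref{c5} and the cited facial-cone results; what the paper's route buys is that the same facial-cone lemmas are reused later (e.g.\ in Lemma \ref{c7} and Theorem \ref{d9}), so the machinery is not wasted. The parenthetical alternative you mention (deducing (ii) and (iii) from Theorem \ref{b2} via $W^{\perp}$ being an $M$-summand of $V^*$) is plausible but would need the dual-side details spelled out; your direct argument does not rely on it.
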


 \begin{proposition}\label{d7} Let $(V, V^+,  \Vert . \Vert )$ be an order smooth $\infty$-normed space and $W$ be a subspace of $V$. If $W$ is an $M$-ideal, then $((V/W), (V/W)^+,  \Vert  . \Vert  )$ is an order smooth 
      $\infty$-normed space.
     \end{proposition}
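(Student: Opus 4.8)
The plan is to reduce, via Theorem \ref{c3}, to a statement about $W^\perp$, and then invoke the $M$-ideal hypothesis. Identify $W^\perp$ with $(V/W)^*$ isometrically; under this identification the dual cone on $(V/W)^*$ is $W^{\perp+}=W^\perp\cap V^{*+}$ (Lemma \ref{b1}). The equivalence of (i) and (ii) in Theorem \ref{c3} then says that $(V/W,(V/W)^+,\|\cdot\|)$ is order smooth $\infty$-normed precisely when $(W^\perp,W^{\perp+},\|\cdot\|)$ satisfies $(OS.1.2)$, so it suffices to prove the latter. (Properness of $(V/W)^+$ then follows, since $(OS.1.2)$ for $W^\perp$ makes $W^{\perp+}$ separating on $V/W$, and generation of $(V/W)^+$ follows from that of $V^+$.)

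Now use that $W$ is an $M$-ideal: $W^\perp$ is an $L$-summand of $V^*$, say $V^*=W^\perp\oplus_{1}W^{\perp'}$, with associated projection $P\colon V^*\to V^*$ onto $W^\perp$ along $W^{\perp'}$, so that $\|f\|=\|Pf\|+\|f-Pf\|$ for all $f\in V^*$ and in particular $\|P\|\le 1$. Since $V$ is order smooth $\infty$-normed, Theorem \ref{duality-of-p-thry} gives that $V^*$ is order smooth $1$-normed and satisfies $(OS.1.2)$. Granting that $P$ is positive — that is, $P(V^{*+})\subseteq V^{*+}$, hence $\subseteq W^{\perp+}$ — the proof finishes at once: for $g\in W^\perp$, use $(OS.1.2)$ in $V^*$ to write $g=f_1-f_2$ with $f_1,f_2\in V^{*+}$ and $\|f_1\|+\|f_2\|\le\|g\|$; then $g=Pg=Pf_1-Pf_2$ with $Pf_1,Pf_2\in W^{\perp+}$ and $\|Pf_1\|+\|Pf_2\|\le\|f_1\|+\|f_2\|\le\|g\|$, which is exactly $(OS.1.2)$ for $W^\perp$. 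Together with Theorem \ref{c3} this yields the Proposition.

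So the whole argument rests on the positivity of $P$, and this is the step I expect to be the real obstacle: for an arbitrary $L$-summand of an ordered Banach space there is no reason for the $L$-projection to be positive, so the $M$-ideal structure and the order smoothness of $V$ must both enter. The route I would follow is to identify $P$ and $I-P$ concretely. Since $M$-ideals have the unique norm-preserving extension property, $W^{\perp'}=(I-P)(V^*)$ is exactly the set of norm-preserving extensions to $V$ of functionals on $W$, so for each $f\in V^*$ the element $(I-P)f$ is the unique norm-minimal representative of the coset $f+W^\perp$, with $\|(I-P)f\|=\mathrm{dist}(f,W^\perp)=\|f|_W\|_{W^*}$. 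The assertion ``$Pf\ge 0$ for $f\in V^{*+}$'' is then the statement that this norm-minimal representative of $f+W^\perp$ lies below $f$, and I would prove it by playing that minimality against $(O.\infty.1)$ for $V$ (equivalently $(OS.1.2)$ for $V^*$); alternatively, one may pass to the bidual, where $W^{\perp\perp}$ is an $M$-summand of $V^{**}$ with $M$-projection $(I-P)^{*}$ and $V^{**}$ is order smooth $\infty$-normed satisfying $(OS.\infty.2)$, reducing the claim to the positivity of $M$-projections in order smooth $\infty$-normed spaces. Establishing that point — the norm-minimal coset representative sitting below $f$, equivalently the positivity of the relevant $M$-projection — is the technical heart; the rest is the bookkeeping above together with Theorems \ref{c3} and \ref{duality-of-p-thry}.
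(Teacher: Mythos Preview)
Your reduction to showing that $(W^\perp,W^{\perp+})$ satisfies $(OS.1.2)$, via Theorem~\ref{c3} and Theorem~\ref{duality-of-p-thry}, is exactly the paper's strategy. The only gap is the one you flag yourself---positivity of the $L$-projection $P$---and you are overestimating its difficulty. That positivity is precisely Lemma~\ref{c5}: if $u\in V^{*+}$ then $\|u\|=\|Pu\|+\|u-Pu\|$, so by Lemma~\ref{pro-of-facl-cne} one has $Pu\in C(u)$, and $C(u)\subset V^{*+}$ in an order smooth $1$-normed space with $(OS.1.2)$. No detour through norm-minimal coset representatives or bidual $M$-projections is needed.

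The paper's own proof of the proposition is in fact a small variant that bypasses $P$ altogether. Given $f\in W^\perp$, take an $(OS.1.2)$ decomposition $f=g-h$ in $V^*$ with $g,h\in V^{*+}$ and $\|f\|=\|g\|+\|h\|$; by Lemma~\ref{pro-of-facl-cne} this forces $g,-h\in C(f)$, and Lemma~\ref{c4} gives $C(f)\subset W^\perp$ because $W^\perp$ is an $L$-summand. Hence $g,h\in W^{\perp+}$ with the required norm equality, which is $(OS.1.2)$ for $W^\perp$. This is the same facial-cone idea that underlies Lemma~\ref{c5}; your route through the projection and the paper's direct route are two packagings of one argument.
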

     To prove the above proposition and theorem, we need following informations and lemmas. 
We recall few facts from \cite{AE72} which will be needed in the next lemma. Let $K$ be a convex subset of a  vector space $V$. Then a non-empty convex subset $F\subset K$ is called a \emph{face} if for any $u,v \in K,$ we have 
$u,v \in F$ whenever  $\lambda u+(1-\lambda)v\in F$ for some $ 0 < \lambda < 1.$ We define $$\cone (K) :=\cup_{\lambda\geq 0}\lambda K$$ is the smallest cone containing $K.$ 
 If $v\in V$ and $v\neq 0,$ we define
$$ \face_{V_1} (\frac{v}{\Vert v \Vert}) =\{ w\in V_{1} : \frac{v}{\Vert v \Vert}= \lambda w +(1-\lambda)u \text{ for some } \lambda \in (0,1) \text{ and some } u\in V_1\}.$$
We write $C(v) := 
\cone (\face_{V_1}(\frac{v}{\Vert v \Vert}))$ for the smallest facial cone containing $v$ if $v \not= 0$. We define $C(0) = \{ 0 \}$.  
For a cone $C$ in $V$, we write 
$$C'=\{v\in V: C\cap C(v)=\{0\}\}.$$ 
It may be noted that $C'$ may not be convex, in general. 

\begin{lemma}\label{pro-of-facl-cne}\cite[Lemma 2.3]{AE72}
	Let $V$ be a normed linear space and let $u_1,\cdots, u_n\in V.$ Then the following facts are equivalent:
	\begin{enumerate}
		\item [(i)]$u_1,\cdots, u_n\in C(u_1+\cdots+u_n).$
		\item [(ii)] $\|\Sigma_{1=1}^{n}u_i \|=\Sigma_{i=1}^{n} \|u_i \|.$
	\end{enumerate}
\end{lemma}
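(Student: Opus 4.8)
The plan is to run both implications through a single device, namely a norming functional for the sum $v := \sum_{i=1}^n u_i$. The inequality $\|v\| \le \sum_i \|u_i\|$ is automatic from the triangle inequality, so in each direction the genuine content is to force the reverse inequality (for $(i)\implies(ii)$) or to exhibit each $u_i$ inside the facial cone $C(v)$ (for $(ii)\implies(i)$). I first dispose of the degenerate case $v=0$: here $C(v)=\{0\}$ by definition, while $\|v\|=\sum_i\|u_i\|$ reduces to $\sum_i\|u_i\|=0$, so both $(i)$ and $(ii)$ are equivalent to all $u_i=0$. Henceforth I assume $v\neq 0$ and fix, by the Hahn--Banach theorem, a functional $f\in V^*$ with $\|f\|=1$ and $f(v)=\|v\|$, equivalently $f(v/\|v\|)=1$.

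For $(i)\implies(ii)$ the key step is to show that this $f$ norms every element of $C(v)$, i.e. $f(u)=\|u\|$ for all $u\in C(v)$. Writing $u=\lambda w$ with $\lambda\ge 0$ and $w\in\face_{V_1}(v/\|v\|)$ (possible since $\cone(K)=\cup_{\lambda\ge0}\lambda K$), the defining relation $v/\|v\|=\mu w+(1-\mu)u'$ for some $\mu\in(0,1)$ and $u'\in V_1$ yields, after applying $f$, the identity $1=\mu f(w)+(1-\mu)f(u')$. Since $f(w)\le\|w\|\le1$ and $f(u')\le1$, a convex combination equal to $1$ forces $f(w)=1$, whence $\|w\|=1$ and $f(u)=\lambda=\|u\|$. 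Applying this to each $u_i\in C(v)$ gives $\sum_i\|u_i\|=\sum_i f(u_i)=f(v)=\|v\|$, which combined with the triangle inequality is exactly $(ii)$. I expect this propagation of the norming functional from $v$ along the whole facial cone to be the part requiring the most care, since it is where the facial structure of $V_1$ is actually used.

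For $(ii)\implies(i)$, the additivity $\|v\|=\sum_i\|u_i\|$ first gives $\|v\|=f(v)=\sum_i f(u_i)\le\sum_i\|u_i\|=\|v\|$, so equality holds termwise: $f(u_i)=\|u_i\|$ for every $i$. Fix an index $i$ with $u_i\neq0$ (the case $u_i=0\in C(v)$ being immediate) and set $w_i=u_i/\|u_i\|$ and $\lambda=\|u_i\|/\|v\|\in(0,1]$. When $\lambda<1$ I would write $v/\|v\|=\lambda w_i+(1-\lambda)u'$ with $u'=(\|v\|-\|u_i\|)^{-1}\sum_{j\neq i}u_j$; the estimate $\|u'\|\le(\sum_{j\neq i}\|u_j\|)/(\|v\|-\|u_i\|)=1$, where $\sum_{j\neq i}\|u_j\|=\|v\|-\|u_i\|$ comes from $(ii)$, shows $u'\in V_1$, so $w_i\in\face_{V_1}(v/\|v\|)$ and hence $u_i=\|u_i\|\,w_i\in C(v)$. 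The remaining possibility $\lambda=1$ means $\sum_{j\neq i}\|u_j\|=0$, so $v=u_i$ and $u_i\in C(v)$ trivially. This establishes $(i)$ and completes the equivalence.

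The whole argument reduces to the observation that a norming functional for $v$ is simultaneously a norming functional for each summand precisely when equality holds in the triangle inequality, and that this common norming property is exactly what places the normalized summands in the smallest face of $V_1$ through $v/\|v\|$. The only delicate points are the forward propagation described above and the bookkeeping of the degenerate cases $v=0$ and $v=u_i$, both of which I would handle explicitly as indicated rather than absorb into the generic argument.
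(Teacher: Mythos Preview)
Your argument is correct. The paper does not supply its own proof of this lemma; it is quoted verbatim from Alfsen--Effros \cite[Lemma~2.3]{AE72} and used as a black box, so there is nothing in the paper to compare against beyond noting that your proof would serve as a self-contained replacement for that citation.

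Two minor remarks on presentation. First, in the direction $(ii)\Rightarrow(i)$ you invoke the norming functional $f$ and derive $f(u_i)=\|u_i\|$, but you never actually use this; the convex decomposition $v/\|v\|=\lambda w_i+(1-\lambda)u'$ together with the bound $\|u'\|\le 1$ already places $w_i$ in $\face_{V_1}(v/\|v\|)$ by the paper's definition, without any appeal to $f$. So the norming functional is needed only for $(i)\Rightarrow(ii)$. Second, in $(i)\Rightarrow(ii)$ you should note that the case $u_i=0$ is harmless (it contributes $0$ to both sides), since your representation $u=\lambda w$ with $w$ in the face and the conclusion $\|w\|=1$ presuppose $u\neq 0$.
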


\begin{theorem}\cite[Part I, Theorem 2.9]{AE72}\label{cne-dec}
	Let $C$ be a norm closed convex cone in a Banach space $V.$ Then every $u\in V$ admits a decomposition $u=v+w, ~\| u \|= \|v \|+ \| v \|,$ where $v\in C$ and $w\in C'.$  
\end{theorem}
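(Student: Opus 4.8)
The plan is to produce $v$ as a maximal norm-additive ``piece'' of $u$ lying in $C$, obtained by Zorn's lemma, and then to exploit maximality to force the complementary piece $w=u-v$ into $C'$ (the norm identity being of course $\|u\|=\|v\|+\|w\|$). Set
$\mathcal{D}=\{b\in C:\|b\|+\|u-b\|=\|u\|\}$; by Lemma \ref{pro-of-facl-cne} with $n=2$, $\mathcal{D}$ is precisely the set of $b\in C$ with $b,\,u-b\in C(u)$, and clearly $0\in\mathcal{D}$. I would order $\mathcal{D}$ by declaring $b\preceq b'$ iff $b'-b\in C$ and $\|b'\|=\|b\|+\|b'-b\|$. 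First I would check that this is a genuine partial order: antisymmetry and transitivity both follow from the triangle inequality together with the additivity clause, and notably do not require $C$ to be proper.

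The essential feature of $\preceq$ is that the norm is additive along it, so $b\preceq b'$ gives $\|b'-b\|=\|b'\|-\|b\|$. Given a chain in $(\mathcal{D},\preceq)$, let $s$ be the supremum of the norms of its members and choose a $\preceq$-increasing sequence $b_n$ from the chain with $\|b_n\|\to s$ (this is possible because within a chain strict norm increase is the same as strict $\preceq$-increase). Then $\|b_n-b_m\|=\big|\,\|b_n\|-\|b_m\|\,\big|\to0$, so $(b_n)$ is Cauchy; here \emph{completeness of $V$} is the crucial ingredient. Its limit $b^{\ast}$ lies in $\mathcal{D}$ (since $C$ is closed and the norm is continuous) and, by the same additivity, is a $\preceq$-upper bound for the entire chain. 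Hence $(\mathcal{D},\preceq)$ is inductive, Zorn's lemma yields a maximal element $v$, and putting $w=u-v$ we get $v\in C$ with $\|u\|=\|v\|+\|w\|$.

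The crux is to show $w\in C'$. Arguing by contradiction, suppose $C\cap C(w)\neq\{0\}$ and pick a nonzero $c_0\in C\cap C(w)$. Since $c_0/\|c_0\|\in\face_{V_1}(w/\|w\|)$, the very definition of the face provides $\lambda\in(0,1)$ and $z\in V_1$ with $w/\|w\|=\lambda\,c_0/\|c_0\|+(1-\lambda)z$; rescaling $c_0$ produces a nonzero $c'\in C$ with $\|c'\|+\|w-c'\|\le\|w\|$, hence $=\|w\|$ by the triangle inequality, i.e. $c',\,w-c'\in C(w)$. Now I would apply Lemma \ref{pro-of-facl-cne} with $n=3$ to the triple $v,c',w-c'$, whose sum is $u$: the chain of inequalities $\|u\|\le\|v\|+\|c'\|+\|w-c'\|=\|v\|+\|w\|=\|u\|$ must collapse to equalities, which forces $\|v+c'\|=\|v\|+\|c'\|$ and $\|v+c'\|+\|u-(v+c')\|=\|u\|$, so $v+c'\in\mathcal{D}$ and $v\prec v+c'$ with $c'\neq0$. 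This contradicts the maximality of $v$, and therefore $w\in C'$.

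I expect the main obstacle to be this last ``peeling'' step: converting an arbitrary nonzero member of $C\cap C(w)$ into a norm-additive increment $c'$ that can be adjoined to $v$, which is exactly where the facial-cone definition and the $n=3$ instance of Lemma \ref{pro-of-facl-cne} must be combined. By contrast, the inductiveness of $(\mathcal{D},\preceq)$ is comparatively routine once one observes that $\preceq$ makes the norm additive, thereby reducing the Cauchyness of a maximizing chain to the convergence of its norms and letting completeness do the rest.
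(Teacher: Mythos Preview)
The paper does not supply its own proof of this theorem; it is quoted verbatim as \cite[Part I, Theorem~2.9]{AE72} and used as an external tool, so there is nothing in the present paper to compare against. That said, your argument is essentially the classical Alfsen--Effros proof: order the norm-additive pieces of $u$ lying in $C$, use completeness and closedness of $C$ to show chains have upper bounds, apply Zorn's lemma, and then use the face description (Lemma~\ref{pro-of-facl-cne}) to show that any nontrivial overlap $C\cap C(w)\neq\{0\}$ would let you enlarge the maximal piece. The details you give --- in particular the reduction of chain-boundedness to a Cauchy sequence via the norm-additivity of $\preceq$, and the rescaling $c'=\lambda\|w\|\,c_0/\|c_0\|$ followed by the collapse $\|u\|\le\|v\|+\|c'\|+\|w-c'\|=\|v\|+\|w\|=\|u\|$ to obtain $v\prec v+c'$ --- are all sound. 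One small point worth making explicit: the case $w=0$ should be handled separately (there $C(w)=\{0\}$ by definition, so $w\in C'$ trivially), since the face construction $\face_{V_1}(w/\|w\|)$ presupposes $w\neq0$.
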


\begin{lemma}\label{c4}
  Let $(V, V^+,  \Vert . \Vert )$ be an order smooth $1$-normed space satisfying $(OS.1.2)$ and  $W$ be an $L$-summand in $V$. If  $u\in W^{}$, then  $C(u)\subseteq W^{}$.
\end{lemma}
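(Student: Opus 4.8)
The plan is to argue directly from the $L$-summand structure; the order-theoretic hypotheses on $V$ are not needed for this particular step. Since $W$ is an $L$-summand we have an (algebraic) direct sum $V = W \oplus_1 W'$ with $W'$ a closed subspace, so every $v \in V$ has a unique representation $v = v_W + v_{W'}$ with $v_W \in W$, $v_{W'} \in W'$, and $\|v\| = \|v_W\| + \|v_{W'}\|$. Fix $u \in W$; if $u = 0$ then $C(u) = \{0\} \subseteq W$, so assume $u \neq 0$ and write $\hat{u} = u/\|u\|$, a unit vector of $W$.

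First I would take an arbitrary $z \in \face_{V_1}(\hat{u})$, so that $\hat{u} = \lambda z + (1-\lambda) y$ for some $\lambda \in (0,1)$ and some $y \in V_1$. Decomposing $z$ and $y$ and using that $W$ and $W'$ are subspaces, $\hat{u} = \big(\lambda z_W + (1-\lambda) y_W\big) + \big(\lambda z_{W'} + (1-\lambda) y_{W'}\big)$ exhibits $\hat{u}$ as an element of $W$ plus an element of $W'$; since $\hat{u} \in W$, uniqueness of the direct-sum decomposition gives $\hat{u} = \lambda z_W + (1-\lambda) y_W$ and $\lambda z_{W'} + (1-\lambda) y_{W'} = 0$.

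The crux is then the norm bookkeeping: from $z, y \in V_1$ we have $\|z_W\| \le \|z\| \le 1$ and $\|y_W\| \le \|y\| \le 1$, while $1 = \|\hat{u}\| = \|\lambda z_W + (1-\lambda) y_W\| \le \lambda\|z_W\| + (1-\lambda)\|y_W\| \le 1$. Hence all these inequalities are equalities and, because $\lambda \in (0,1)$, we get $\|z_W\| = 1$; consequently $\|z_{W'}\| = \|z\| - \|z_W\| \le 0$, so $z_{W'} = 0$ and $z = z_W \in W$. This proves $\face_{V_1}(\hat{u}) \subseteq W$, and since $W$ is a subspace it contains $\cone(\face_{V_1}(\hat{u})) = C(u)$, which is the claim. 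There is no genuine obstacle here; the only points needing care are that $W'$ is a subspace (so that the $W'$-parts of $z$ and $y$ add linearly and the uniqueness of the $\oplus_1$-decomposition can be invoked) and that $\lambda$ is strictly between $0$ and $1$, which is exactly what forces $\|z_W\|$ up to $1$.
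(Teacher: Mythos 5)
Your argument is correct and follows essentially the same route as the paper: decompose the elements of $\face_{V_1}(\hat{u})$ along $V = W \oplus_1 W'$ and use the additivity of the norm together with $\lambda \in (0,1)$ to force the $W'$-components to vanish. The only difference is cosmetic --- you obtain $\hat{u} = \lambda z_W + (1-\lambda) y_W$ directly from uniqueness of the direct-sum decomposition, whereas the paper reaches the same identity by rearranging the two components into a pair of equations and applying the $\ell_1$-norm identity, after which both proofs finish with the same norm bookkeeping.
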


  \begin{proof}
 Let  $w\in W\setminus \{0\}$. Let $u\in C(w)$ and without loss of generality we may assume that $ \Vert  u \Vert =1$. Then by definition of
$C(w)$, we have $u\in \face_{K}(\frac{w}{ \Vert  w \Vert })$. Thus 
there is a $v\in V_{1}$ such that $\lambda u+(1-\lambda)v=\frac{w}{ \Vert  w \Vert }$ for some $\lambda\in (0,1)$. By triangle inequality, $ \Vert  v \Vert =1= \Vert u\Vert$.
Since $u,v\in V$ and $W$ is subspace of a complete normed space, there are $u_1,v_1\in W$ and $u_2, v_2\in W^{'}$ such that
\begin{align*}
u&=u_1+u_2           &         \Vert  u \Vert  &= \Vert  u_1 \Vert + \Vert  u_2 \Vert , \\
v&=v_1+v_2           &         \Vert  v \Vert  &= \Vert  v_1 \Vert  + \Vert  v_2 \Vert .         
\end{align*}
 Now,  $\frac{w}{ \Vert  w \Vert  }=\lambda u_1+(1-\lambda)v_1 +\lambda u_2+(1-\lambda)v_2$. From it, we can rewrite as 
   
    \begin{align*}
   (\lambda u_1+(1-\lambda)v_1-\frac{w}{ \Vert  w\Vert  })+\lambda u_2=-(1-\lambda)v_2, \\
    (\lambda u_1+(1-\lambda)v_1-\frac{w}{ \Vert  w \Vert  })+(1-\lambda) v_2=-\lambda u_2. 
   \end{align*} 
   Since  $\lambda u_1+(1-\lambda)v_1-\frac{w}{  \Vert  w  \Vert }\in W \mbox{ and } u_2,v_2 \in W^{'}$ and $W$ is an $L$-summand, from last two equations, we get following norm equalities:
\begin{align*}
  \Vert  \lambda u_1+(1-\lambda)v_1-\frac{w}{ \Vert  w \Vert  } \Vert +\lambda  \Vert  u_2 \Vert  =(1-\lambda) \Vert  v_2 \Vert , \\
   \Vert  \lambda u_1+(1-\lambda)v_1-\frac{w}{ \Vert  w \Vert } \Vert +(1-\lambda)  \Vert  v_2 \Vert =\lambda \Vert  v_2 \Vert ,
  \end{align*}
  which implies that  $\lambda u_1+(1-\lambda)v_1=\frac{w}{  \Vert  w  \Vert }$. Since $ \Vert  u_1 \Vert  , \Vert  v_1 \Vert  \leq1 $, thus by triangle inequality, $ \Vert  u_1 \Vert =1= \Vert  u_2 \Vert $, so that $u_2=0=v_2$.
   Hence $C(w)\subset W$.
    \end{proof}
     \begin{lemma}\cite{AAp1}\label{c5} Let $(V, V^{+},  \Vert . \Vert )$ be an order smooth $1$-noremd space satisfying $(OS.1.2)$. If $L$ is an $L$-projection of $V^{*}$, then $L$ is a positive linear map.
    \end{lemma}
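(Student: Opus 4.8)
The plan is to read off positivity of $L$ from the interaction between the $L$-summand decomposition $V^* = W \oplus_1 W^{'}$, where $W = L(V^*)$ and $W^{'} = (I-L)(V^*)$, and the facial cones used in Lemma~\ref{c4}. First I would record the ambient geometry: by Theorem~\ref{duality-of-p-thry}, $(V^*, V^{*+}, \Vert . \Vert)$ is an order smooth $\infty$-normed space satisfying $(OS.\infty.2)$, so $V^{*+}$ is generating and norm closed and, by $(O.\infty.1)$, the norm of $V^*$ is monotone on order intervals. Next, the same computation as in the Banach-space core of the proof of Lemma~\ref{c4} (which uses only the $L$-summand structure, not order smoothness) shows that $W$ and $W^{'}$ are \emph{facial}: $g\in W$ implies $C(g)\subseteq W$ and $g\in W^{'}$ implies $C(g)\subseteq W^{'}$. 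Hence, writing any $f$ as $f = Lf + (I-L)f$ with $\Vert f\Vert = \Vert Lf\Vert + \Vert (I-L)f\Vert$ (the defining identity of an $L$-projection), Lemma~\ref{pro-of-facl-cne} gives $Lf,\,(I-L)f\in C(f)$, and therefore $C(Lf)\subseteq C(f)\cap W$ and $C((I-L)f)\subseteq C(f)\cap W^{'}$.

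Now fix $f\in V^{*+}$; the goal is $Lf\in V^{*+}$, and by symmetry the same argument then yields $(I-L)f\in V^{*+}$. Apply Theorem~\ref{cne-dec} to $Lf$ with the closed convex cone $C = V^{*+}$: there are $p\in V^{*+}$ and $q\in (V^{*+})^{'}$ with $Lf = p+q$ and $\Vert Lf\Vert = \Vert p\Vert + \Vert q\Vert$; by Lemma~\ref{pro-of-facl-cne}, $p,q\in C(Lf)\subseteq W$, so $p\in W\cap V^{*+}$ and $q\in W\cap (V^{*+})^{'}$. Do the same for $(I-L)f = p^{'}+q^{'}$ with $p^{'}\in W^{'}\cap V^{*+}$, $q^{'}\in W^{'}\cap (V^{*+})^{'}$ and $\Vert (I-L)f\Vert = \Vert p^{'}\Vert + \Vert q^{'}\Vert$. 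Then $f = (p+p^{'}) + (q+q^{'})$ with $p+p^{'}\in V^{*+}$, and combining the triangle inequality with $\Vert f\Vert = \Vert Lf\Vert + \Vert (I-L)f\Vert$ forces the chain of inequalities to collapse, giving $\Vert f\Vert = \Vert p+p^{'}\Vert + \Vert q+q^{'}\Vert$ and $\Vert q+q^{'}\Vert = \Vert q\Vert + \Vert q^{'}\Vert$. Thus it suffices to show $q = q^{'} = 0$, for then $Lf = p\in V^{*+}$.

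This last step is the crux and the main obstacle. Since $q\in C(Lf)\subseteq C(f)$ and $q^{'}\in C((I-L)f)\subseteq C(f)$, the vector $q+q^{'}$ lies in the facial cone $C(f)$ of the \emph{positive} element $f$ while being the ``$(V^{*+})^{'}$-part'' in a balanced decomposition of $f\ge 0$; one must prove such a vector is $0$. I would attack this through norming nets: $(OS.1.2)$ on $V$ gives $B_V = \co(K\cup(-K))$ with $K = B_V\cap V^{+}$, whence $\Vert g\Vert = \sup\{|g(v)|:v\in K\}$ for all $g\in V^*$ and $\Vert g\Vert = \sup\{g(v):v\in K\}$ when $g\ge 0$; feeding the identities $\Vert f\Vert = \Vert Lf\Vert + \Vert (I-L)f\Vert = \Vert Lf - (I-L)f\Vert$ and $f\ge 0$ into this produces nets in $K\subseteq V^{+}$ that simultaneously norm $Lf$ and $(I-L)f$, and reconciling these with the facial containments above should pin down the signs of the components and force $q=q^{'}=0$. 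As a sanity check, when $V^*$ additionally carries an order unit $e$ (e.g. when $V$ is the self-adjoint part of a $\mathrm{C}^*$-algebra, an operator system, or an $A(K)$-space) the argument shortcircuits: $Le\in C(e) = \R^{+}e$ and idempotency give $Le\in\{0,e\}$, and if $Le = e$ then for $f\ge 0$ one writes $f = \Vert f\Vert e - g$ with $0\le g\le\Vert f\Vert e$, so that $Lf = \Vert f\Vert e - Lg \ge \Vert f\Vert e - \Vert Lg\Vert e \ge 0$ because $\Vert Lg\Vert \le \Vert g\Vert \le \Vert f\Vert$. Even in that favourable case the remaining alternative $Le = 0$ already requires a separate argument (one must show it cannot hold for $L\neq 0$), and the absence of an order unit in the general setting—so that $C(f)$ must play its role—is exactly where the real work lies.
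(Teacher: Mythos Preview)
There is a typo in the lemma as stated: the paper's own proof (and every subsequent use of the lemma, in Lemma~\ref{c6} and in the proof of Theorem~\ref{d11}) treats $L$ as an $L$-projection on the order smooth $1$-space itself, not on its dual. Reading it that way, the paper's argument is two lines: for $u\in V^{+}$ the $L$-projection identity $\Vert u\Vert=\Vert Lu\Vert+\Vert (I-L)u\Vert$ combined with Lemma~\ref{pro-of-facl-cne} gives $Lu\in C(u)$, and the external Lemma~2.2 of \cite{AA18} asserts that in an order smooth $1$-normed space with $(OS.1.2)$ one has $C(u)\subset V^{+}$ whenever $u\in V^{+}$; hence $Lu\ge 0$.

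You followed the misprinted statement and therefore work in $V^{*}$, which by duality is order smooth~$\infty$, not order smooth~$1$. That is exactly why your ``crux'' step does not close: the inclusion $C(f)\subset V^{*+}$ for $f\ge 0$ is the $(OS.1.2)$ phenomenon and is simply unavailable in an $\infty$-space, so you are forced into the detour through Theorem~\ref{cne-dec} and end up unable to kill the residual pieces $q,q'$. In fact you already had the decisive containment $Lf\in C(f)$ in your first paragraph (from $\Vert f\Vert=\Vert Lf\Vert+\Vert(I-L)f\Vert$ and Lemma~\ref{pro-of-facl-cne}); in the intended setting that containment finishes the proof immediately via $C(f)\subset V^{+}$, and everything after it---the cone decomposition of $Lf$ and $(I-L)f$, the norming-net argument, the order-unit digression---is unnecessary. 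The single missing idea is the cited geometric fact that facial cones of positive elements stay positive under $(OS.1.2)$.
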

    \begin{proof} Let $u\in V^+$. Since $P$ is an $L$-projection, therefore $\Vert u\Vert = \Vert L(u)\Vert+ \Vert u- L(u)\Vert$. Thus by Lemma \ref{pro-of-facl-cne}, we have $L(u), u-L(u)\in C(u)$.
Since $u\in V^{+}$, by Lemma 2.2 of \cite{AA18}, we have $C(u)\subset V^{+}$. Therefore, we have $L(u) \geq 0$.  
\end{proof}
     
    \begin{lemma}\label{c6} Let $(V, V^+,  \Vert . \Vert )$ be an order smooth $\infty$-normed space and $W$ be an $M$-ideal in $V$ so that $V^{*}= W^{\perp}\oplus_{1}W^{\perp'}$, where $W^{\perp'}$ is the complemented 
    subspace of $W^{\perp}$. If $\varphi_{W^{\perp}}(V^{*+})= \overline{\varphi_{W^{\perp}}(V^{*+})}^{w^*}$, then $(V^*/W^{\perp},(V^{^*}/W^{\perp})^+,  \Vert . \Vert )$ is isometrically order isomorphic to 
    $(W^{\perp'}, W^{\perp'+},  \Vert . \Vert )$. 
    \end{lemma}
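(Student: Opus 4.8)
The plan is to show that the canonical map $\Phi := \varphi_{W^{\perp}}|_{W^{\perp'}} : W^{\perp'} \longrightarrow V^{*}/W^{\perp}$ is simultaneously a surjective linear isometry and a bijection of the relevant positive cones, which together say exactly that $\Phi$ is an isometric order isomorphism.

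First I would dispose of the metric part, which is standard $L$-summand bookkeeping. Since $W$ is an $M$-ideal we have $V^{*}=W^{\perp}\oplus_{1}W^{\perp'}$, so $\Phi$ is linear, injective (because $W^{\perp}\cap W^{\perp'}=\{0\}$) and onto (because $V^{*}=W^{\perp}+W^{\perp'}$). For the isometry, observe that for $f\in W^{\perp'}$ and any $g\in W^{\perp}$ the $\ell^{1}$-decomposition gives $\Vert f-g\Vert=\Vert f\Vert+\Vert g\Vert\ge\Vert f\Vert$, with equality at $g=0$; hence $\Vert\Phi(f)\Vert=\Vert f+W^{\perp}\Vert=\inf_{g\in W^{\perp}}\Vert f-g\Vert=\Vert f\Vert$.

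Next I would match the cones. By Definition \ref{a4}(ii), $(V^{*}/W^{\perp})^{+}=\overline{\varphi_{W^{\perp}}(V^{*+})}^{w^{*}}$, and the standing hypothesis $\varphi_{W^{\perp}}(V^{*+})=\overline{\varphi_{W^{\perp}}(V^{*+})}^{w^{*}}$ collapses this to $(V^{*}/W^{\perp})^{+}=\varphi_{W^{\perp}}(V^{*+})$. One inclusion is immediate: if $f\in W^{\perp'+}=W^{\perp'}\cap V^{*+}$ then $\Phi(f)=f+W^{\perp}\in\varphi_{W^{\perp}}(V^{*+})=(V^{*}/W^{\perp})^{+}$. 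For the reverse inclusion, let $P,Q:V^{*}\to V^{*}$ be the $L$-projections onto $W^{\perp}$ and $W^{\perp'}$ respectively, furnished by $V^{*}=W^{\perp}\oplus_{1}W^{\perp'}$. Given $\eta\in(V^{*}/W^{\perp})^{+}=\varphi_{W^{\perp}}(V^{*+})$, choose $h\in V^{*+}$ with $\eta=h+W^{\perp}$; then $h-Q(h)=P(h)\in W^{\perp}$, so $\Phi(Q(h))=Q(h)+W^{\perp}=h+W^{\perp}=\eta$, and $Q(h)\in W^{\perp'}$, so everything reduces to checking $Q(h)\ge 0$.

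The positivity of this $L$-projection is the crux. Here I would invoke Theorem \ref{duality-of-p-thry}: since $(V,V^{+},\Vert\cdot\Vert)$ is order smooth $\infty$-normed, its dual $(V^{*},V^{*+},\Vert\cdot\Vert)$ is order smooth $1$-normed and satisfies $(OS.1.2)$; then Lemma \ref{c5}, applied with $V^{*}$ in the role of the order smooth $1$-normed space, shows every $L$-projection on $V^{*}$, in particular $Q$, is positive. (Mechanically this is Lemma \ref{pro-of-facl-cne}: $\Vert h\Vert=\Vert Q(h)\Vert+\Vert h-Q(h)\Vert$ forces $Q(h)\in C(h)$, and for $h\in V^{*+}$ the facial cone $C(h)$ lies inside $V^{*+}$.) Consequently $Q(h)\in W^{\perp'}\cap V^{*+}=W^{\perp'+}$ and $\eta=\Phi(Q(h))\in\Phi(W^{\perp'+})$, giving $\Phi(W^{\perp'+})=(V^{*}/W^{\perp})^{+}$ and completing the proof. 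I expect the only genuine obstacle to be this last step — confirming that the duality theory puts us exactly in the hypotheses of Lemma \ref{c5} so that the projection is positive — whereas the $w^{*}$-closedness assumption plays the modest role of letting us describe $(V^{*}/W^{\perp})^{+}$ concretely as $\varphi_{W^{\perp}}(V^{*+})$ rather than as its $w^{*}$-closure.
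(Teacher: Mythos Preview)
Your argument is correct and is essentially the same as the paper's: both proofs use the $L$-summand structure for the isometry, invoke the hypothesis $\varphi_{W^{\perp}}(V^{*+})=\overline{\varphi_{W^{\perp}}(V^{*+})}^{w^{*}}$ to identify $(V^{*}/W^{\perp})^{+}$ with $\varphi_{W^{\perp}}(V^{*+})$, and appeal to Lemma~\ref{c5} (positivity of $L$-projections on an order smooth $1$-normed space with $(OS.1.2)$, here $V^{*}$) for the cone matching. The only cosmetic difference is that the paper writes the isomorphism in the other direction, defining $\varphi:V^{*}/W^{\perp}\to W^{\perp'}$ by $\varphi(f+W^{\perp})=P(f)$ with $P$ the $L$-projection onto $W^{\perp'}$; your $\Phi$ is its inverse.
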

    \begin{proof} Let $P$ be the $L$-projection of $V^*$ onto $W^{\perp'}$. We define a map $\varphi: V^{*}/W^{\perp} \mapsto W^{\perp'}$ by
     $$\varphi(f+ W^{\perp})=P(f)$$
      for all $f\in V^{*}$. Let $f\in V^{*}$ such that 
    $P(f)= 0$. Then $f\in W^{\perp}$ so that $f+W^{\perp}= 0+ W^{\perp}$. Hence $\varphi$ is well defined. Let $f\in V^{*}$. We claim that $ \Vert  f+ W^{\perp} \Vert  =  \Vert  P(f) \Vert $. Since $f-P(f)\in W^{\perp}$,
    we have $f+W^{\perp}= P(f)+ W^{\perp}$. Since $P$ is an $L$-projection on $W^{\perp'}$, thus $ \Vert  P(f)+ g \Vert =  \Vert  P(f) \Vert +  \Vert  g \Vert $ for all $g\in W^{\perp}$. Therefore
     $ \Vert  P(f) \Vert =  \Vert  f+ W^{\perp} \Vert $ and
     $\varphi$ is isometry onto $W^{\perp'}$.  By assumption, $(V^{*}/W^{\perp})^{+}= \varphi_{W^{\perp}}(V^{*+})$. Let $f\in V^{*+}.$ Since $P$ is an $L$-projection, by Lemma \ref{c5}, $P$ is a positive map so that
    $P(f)\in W^{\perp'+}$. Hence $\varphi$ is a positive map.  
    Conversely, if $g\in W^{\perp'+}$, then $g\in V^{*+}$ and $\varphi(g+ W^{\perp})= P(g)= g$. Thus  $\varphi^{-1}$ is also positive map. Hence $\varphi: V^{^*}/W^{\perp}\mapsto W^{\perp'}$ is an isometrical order isomorphism.
    \end{proof}
    \begin{lemma}\label{c7}Let $(V, V^+,  \Vert . \Vert )$ be an order smooth $\infty$-normed space and $W$ be an $M$-ideal in $V$ so that $V^{*}= W^{\perp}\oplus_{1}W^{\perp'}$, where $W^{\perp'}$ is the complemented 
    subspace of $W^{\perp}$. Then $(W^{\perp'}, W^{\perp'+},  \Vert . \Vert )$ is an order smooth $1$-normed space.
    \end{lemma}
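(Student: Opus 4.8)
The plan is to realise $(W^{\perp'},W^{\perp'+})$ as the image of $(V^{*},V^{*+})$ under the norm-one positive $L$-projection supplied by the $M$-ideal decomposition, and then to pull back the two order-smoothness axioms of $V^{*}$ along this projection. To set the stage: since $V$ is order smooth $\infty$-normed, Theorem \ref{duality-of-p-thry} gives that $(V^{*},V^{*+},\Vert\cdot\Vert)$ is an order smooth $1$-normed space satisfying $(OS.1.2)$; in particular $V^{*+}$ is proper, norm closed and generating, and $V^{*}$ enjoys $(O.1.1)$, $(O.1.2)$ and $(OS.1.2)$. As $W$ is an $M$-ideal, $V^{*}=W^{\perp}\oplus_{1}W^{\perp'}$; let $P:V^{*}\to W^{\perp'}$ be the associated $L$-projection, so that $P$ restricts to the identity on $W^{\perp'}$, $\ker P=W^{\perp}$, and $\Vert f\Vert=\Vert Pf\Vert+\Vert f-Pf\Vert$ for all $f\in V^{*}$. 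Then $W^{\perp'}$ is a closed subspace of $V^{*}$ (the range of the bounded projection $P$), the cone $W^{\perp'+}=W^{\perp'}\cap V^{*+}$ is proper (a subcone of the proper cone $V^{*+}$) and norm closed, and $\Vert P\Vert\le 1$.

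The decisive step is that $P$ is positive, i.e.\ $P(V^{*+})\subseteq W^{\perp'+}$. I would obtain this exactly as in the proof of Lemma \ref{c6}: for $u\in V^{*+}$ the equality $\Vert u\Vert=\Vert Pu\Vert+\Vert u-Pu\Vert$, combined with Lemma \ref{pro-of-facl-cne}, forces $Pu,\ u-Pu\in C(u)$; since the facial cone $C(u)$ of a positive element of $V^{*}$ is contained in $V^{*+}$ — the ingredient behind Lemma \ref{c5}, cf.\ \cite[Lemma 2.2]{AA18} — we get $Pu\in W^{\perp'}\cap V^{*+}=W^{\perp'+}$, i.e.\ $P$ is positive.

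Granting positivity of $P$, both axioms for $(W^{\perp'},W^{\perp'+},\Vert\cdot\Vert)$ transfer from $V^{*}$. For $(O.1.1)$: if $u\le v\le w$ in $(W^{\perp'},W^{\perp'+})$, then $v-u$ and $w-v$ lie in $W^{\perp'+}\subseteq V^{*+}$, so $u\le v\le w$ also in $(V^{*},V^{*+})$, and $(O.1.1)$ for $V^{*}$ (with $p=1$) gives $\Vert v\Vert\le\Vert u\Vert+\Vert w\Vert$, the norm being the one induced from $V^{*}$. For $(O.1.2)$: given $f\in W^{\perp'}$, use $(OS.1.2)$ in $V^{*}$ to write $f=g_{1}-g_{2}$ with $g_{1},g_{2}\in V^{*+}$ and $\Vert g_{1}\Vert+\Vert g_{2}\Vert\le\Vert f\Vert$; applying $P$ and using $Pf=f$ yields $f=Pg_{1}-Pg_{2}$ with $Pg_{1},Pg_{2}\in W^{\perp'+}$ and $\Vert Pg_{1}\Vert+\Vert Pg_{2}\Vert\le\Vert g_{1}\Vert+\Vert g_{2}\Vert\le\Vert f\Vert$. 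Hence $W^{\perp'}$ in fact satisfies $(OS.1.2)$, a fortiori $(O.1.2)$, and $W^{\perp'+}$ is generating. Therefore $(W^{\perp'},W^{\perp'+},\Vert\cdot\Vert)$ is an order smooth $1$-normed space.

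The only genuine obstacle in this argument is the positivity of $P$: once that is in place, everything else is a routine transfer of the axioms of $V^{*}$ along a norm-one positive projection. The positivity itself reduces to the statement that facial cones of positive elements remain positive in an order smooth $1$-normed space satisfying $(OS.1.2)$, which is the content that makes Lemma \ref{c5} work; if one prefers not to quote Lemma \ref{c5} in the form used in Lemma \ref{c6}, one reproves it here via Lemma \ref{pro-of-facl-cne} and \cite[Lemma 2.2]{AA18}.
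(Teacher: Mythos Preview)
Your proof is correct, but it is organized along a slightly different line than the paper's. The paper does not pass through the $L$-projection $P$ at all: for $f\in W^{\perp'}$ it invokes Lemma~\ref{c4} to get $C(f)\subseteq W^{\perp'}$, then takes an $(OS.1.2)$-decomposition $f=g-h$ in $V^{*}$ and uses Lemma~\ref{pro-of-facl-cne} to conclude that $g,-h\in C(f)\subseteq W^{\perp'}$, so the \emph{original} pieces $g,h$ already lie in $W^{\perp'+}$. You instead establish positivity of $P$ (via Lemma~\ref{c5}) and then \emph{project} the $(OS.1.2)$-decomposition, obtaining $f=Pg_{1}-Pg_{2}$ with the norm estimate coming from $\Vert P\Vert\le 1$. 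Both routes rest on the same facial-cone machinery (Lemma~\ref{pro-of-facl-cne} together with \cite[Lemma~2.2]{AA18}), just packaged differently: the paper uses ``the facial cone of a point of an $L$-summand stays in the summand'' (Lemma~\ref{c4}), while you use ``the facial cone of a positive point is positive'' (Lemma~\ref{c5}). The paper's version yields the marginally stronger conclusion that the ambient decomposition needs no modification; your version is a clean transfer-along-a-positive-contractive-projection argument that would work verbatim in any setting where such a projection is available.
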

    \begin{proof} Since $W^{\perp'}\subset V^{*}$, $W^{\perp'}$ satisfies $(O.1.1)$. We claim that $W^{\perp'}$ satisfies $(OS.1.2)$. Let $f\in W^{\perp'}$. Since $W^{\perp'}$ is an $L$-summand of an order smooth $1$-normed 
    space $V^*$ satisfying $(OS.1.2)$, by  
    Lemma \ref{c4}, we may conclude that
    $C(f)\subset W^{\perp'}$. Since $f\in V^{*}$ and $V^*$ satisfies $(OS.1.2)$, there are $g, h\in V^{^{*+}}$ such that $f= g-h$ and  $ \Vert  f  \Vert = 
    \Vert  g \Vert +  \Vert  h  \Vert $. By  \cite[Lemma 2.3, part I]{AE72}, we have $g, -h\in C(f)$ so that $g, h\in W^{\perp'+}$. Thus $W^{\perp'}$  is an order smooth $1$-normed space
   satisfies $(OS.1.2)$.
    \end{proof}
    \begin{proof}[{\bf Proof of theorem \ref{d11}}]  To prove $W$ is an order smooth $1$-normed space, it is suffices to show that $W$ satisfies $(OS.1.2)$. Let $w\in W$. Since $V$ satisfies $(OS.1.2)$, there are $u, v\in V^+$
 such that $w=u-v$ and $\Vert w\Vert = \Vert u\Vert+ \Vert v\Vert$. By Lemma \ref{pro-of-facl-cne}, we have $u, -v\in C(w)$. Also from Lemma 
 \ref{c4}, we have $v, -w\in W$. Thus $W$ satisfying $(OS.1.2)$.

 We claim that $\varphi_{W^{\perp}}(V^{*+})= \overline{\varphi_{W^{\perp}}(V^{*+})}^{w^*}$. Let $f: W\mapsto \R$ be a bounded positive linear functional. Let $P$ be the $L$-projection of $V$ onto $W$. Then by Lemma \ref{c5}, $P$ is a positive linear map.
 Let $g(v)= f(P(v))$ for all $v\in V$. Then $g:V\mapsto \R$ is a positive linear map such that for all $w\in W$, we have $g(w)= f(P(w))= f(w)$.
 Hence by Proposition \ref{b3}, we have $\varphi_{W^{\perp}}(V^{*+})= \overline{\varphi_{W^{\perp}}(V^{*+})}^{w^*}$.
 
 Since $W$ is an $L$-summand, there is a unique subspace $W^{'}$ of $V$ such that 
 $V= W\oplus_{1}W^{'}$. Since $W^{'}$ is also an $L$-summand of $V$, therefore $(W^{'}, W^{'+}, \Vert.\Vert)$ is an order smooth $1$-normed space satisfying $(OS.1.2)$. We define a map $\varphi: V/W \mapsto W^{'}$ by
 $$\varphi(v+ W)= Q(v)  ~\forall~ v\in V,$$ where $Q$ is the $L$-projection of $V$ onto $W^{'}$. It is straight to check that $\varphi$ is an isometry onto $W^{'}$.
 We claim that
 $\varphi_{W}(V^+)= \overline{\varphi_{W}(V^+)}^{\Vert.\Vert}$. So let $v_{n}\in V^+$ such that $v_{n}+ W \rightarrow v+ W$ for some $v\in V$.  
 Since $Q$ is an $L$-projection, by Lemma \ref{c5}, $Q$ is a positive linear map. Thus $Q(v_{n})\in V^+$ is positive. Since $ \Vert Q(v_{n})-Q(v)\Vert =\Vert v_{n}- v+ W\Vert\rightarrow 0$, we have $Q(v)\geq 0$.
 Since $v-Q(v) \in \ker(Q)(= W)$, we have $v+ W= Q(v)+ W$. Therefore $\varphi_{W}(V^+)= \overline{\varphi_{W}(V^+)}^{\Vert.\Vert}$($=(V/W)^+$).   
 Let $v+ W\in (V/W)^+$.  Since $(V/W)^+= \varphi_{W}(V^+)$, with out loss of generality, we may assume $v\in V^+$. Then $\varphi(v+ W)=Q(v) \geq 0$ as $Q$ is an $L$-projection and $v\in V^+$. Therefore $\varphi$ is positive
 map. Conversely, let $v\in W^{+}$. Since $\varphi(v+W)= Q(v)=v$, therefore $\varphi^{-1}$ is also positive linear map.    
 Since $\varphi: V/W \mapsto W^{'}$ is an isometrical order isomorphism and $(W^{'}, W^{'+}, \Vert.\Vert)$ is an order smooth $1$-normed space satisfying $(OS.1.2)$, therefore $((V/W), (V/W)^+,  \Vert  . \Vert  )$ is an order smooth 
 $1$-normed space satisfying $(OS.1.2)$.
\end{proof}
     
    \begin{proof}[{ \bf Proof of Theorem \ref{d7}:}]  
  Let $W$ is an $M$-ideal in an order smooth $\infty$-normed space $V$. 
  Let $f\in W^{\perp}$. Since $V^{*}$ satisfies $(OS.1.2)$, there are $g, h\in V^{*+}$ such that $f=g-h$ and $ \Vert  f \Vert =  \Vert  g \Vert +  \Vert  h \Vert $. Thus by Lemma \ref{pro-of-facl-cne}, we have  $g, -h\in C(f)$.
    Since $W^{\perp}$ is an $L$-summand, 
   and $f\in W^{\perp}$, we have $g, -h\in C(f)$ so that $g, h\in W^{\perp+}$. Hence $W^{\perp}$ satisfies $(OS.1.2)$. Therefore by Theorem \ref{c3}, $(V/W, (V/W)^+,  \Vert . \Vert )$ is an order smooth $\infty$-normed space.
   \end{proof}
   
    \begin{theorem}\cite{AA18}\label{odr-cone-dec}
	Let $(V,V^+, \Vert.\Vert)$ be a complete order smooth $1$-normed space satisfying $(OS.1.2)$ and $W$ be a closed cone in $V.$ Then for any $v\in V^+,$ there are $w\in W^+$ and $w'\in W'^{+}$ 
	such that $v=w+w'$ and $\Vert v\Vert =\Vert w\Vert +\Vert w' \Vert.$
\end{theorem}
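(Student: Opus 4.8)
The plan is to derive the required decomposition from the Alfsen--Effros facial decomposition (Theorem~\ref{cne-dec}) and then to promote it to a \emph{positive} decomposition using the order-smooth $1$-structure of $V$. Concretely: if $v=0$ the assertion is trivial with $w=w'=0$, so assume $v\neq 0$. Viewing $W$ as a norm-closed convex cone in the Banach space $V$ (convexity being needed to apply Theorem~\ref{cne-dec}), Theorem~\ref{cne-dec} applied to $v$ with $C=W$ produces $v=w+w'$ with $w\in W$, $w'\in W'=C'$ and $\Vert v\Vert=\Vert w\Vert+\Vert w'\Vert$. Since the norms add, Lemma~\ref{pro-of-facl-cne} (with $n=2$) gives $w,w'\in C(v)$. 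Hence the whole statement reduces to the single implication
\[
v\in V^+\ \Longrightarrow\ C(v)\subseteq V^+ .
\]

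This implication is precisely the fact quoted (as \cite[Lemma~2.2]{AA18}) inside the proof of Lemma~\ref{c5}, so one may simply cite it; alternatively it can be derived directly from $(O.1.1)$ and $(OS.1.2)$. Put $\bar v=v/\Vert v\Vert\in V^+$ and let $u\in\face_{V_1}(\bar v)$, say $\bar v=\lambda u+(1-\lambda)z$ with $\lambda\in(0,1)$ and $z\in V_1$; the triangle inequality forces $\Vert u\Vert=\Vert z\Vert=1$. Using $(OS.1.2)$, write $u=u_1-u_2$ and $z=z_1-z_2$ with $u_i,z_i\in V^+$, $\Vert u\Vert=\Vert u_1\Vert+\Vert u_2\Vert$ and $\Vert z\Vert=\Vert z_1\Vert+\Vert z_2\Vert$. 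Then $\bar v=a-b$ with $a=\lambda u_1+(1-\lambda)z_1\in V^+$ and $b=\lambda u_2+(1-\lambda)z_2\in V^+$, so $0\leq\bar v\leq a$, and $(O.1.1)$ gives $1=\Vert\bar v\Vert\leq\Vert a\Vert\leq\lambda\Vert u_1\Vert+(1-\lambda)\Vert z_1\Vert$. On the other hand, since $\Vert u\Vert=\Vert z\Vert=1$, $\Vert\bar v\Vert=\lambda\Vert u\Vert+(1-\lambda)\Vert z\Vert=\big(\lambda\Vert u_1\Vert+(1-\lambda)\Vert z_1\Vert\big)+\big(\lambda\Vert u_2\Vert+(1-\lambda)\Vert z_2\Vert\big)$, which together with the previous inequality forces $\lambda\Vert u_2\Vert+(1-\lambda)\Vert z_2\Vert=0$, hence $u_2=z_2=0$ and $u=u_1\in V^+$. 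As $V^+$ is a cone, $C(v)=\cone\big(\face_{V_1}(\bar v)\big)\subseteq V^+$.

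Combining the two parts, $w,w'\in C(v)\subseteq V^+$, so $w\in W\cap V^+=W^+$ and $w'\in W'\cap V^+=W'^{+}$, while $v=w+w'$ and $\Vert v\Vert=\Vert w\Vert+\Vert w'\Vert$ come straight from Theorem~\ref{cne-dec}. The only genuinely substantive step is the implication $v\in V^+\Rightarrow C(v)\subseteq V^+$; everything else is a routine assembly of Theorem~\ref{cne-dec} and Lemma~\ref{pro-of-facl-cne}, and the completeness hypothesis is used solely to legitimise the application of Theorem~\ref{cne-dec}.
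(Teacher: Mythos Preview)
Your argument is correct and is exactly the proof one would extract from the tools the paper makes available: apply Theorem~\ref{cne-dec} to the closed (hence convex) cone $W$ to get $v=w+w'$ with $\Vert v\Vert=\Vert w\Vert+\Vert w'\Vert$, invoke Lemma~\ref{pro-of-facl-cne} to place $w,w'$ in $C(v)$, and then use $C(v)\subseteq V^+$ for $v\in V^+$ to upgrade to $w\in W^+$, $w'\in W'^{+}$. The paper itself does not prove Theorem~\ref{odr-cone-dec}; it is quoted from \cite{AA18}, and the key implication $v\in V^+\Rightarrow C(v)\subseteq V^+$ is likewise cited there (as \cite[Lemma~2.2]{AA18}) inside the proof of Lemma~\ref{c5}. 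Your self-contained verification of that implication via $(O.1.1)$ and $(OS.1.2)$ is sound: the chain $1=\Vert\bar v\Vert\le\Vert a\Vert\le\lambda\Vert u_1\Vert+(1-\lambda)\Vert z_1\Vert$ together with $1=\big(\lambda\Vert u_1\Vert+(1-\lambda)\Vert z_1\Vert\big)+\big(\lambda\Vert u_2\Vert+(1-\lambda)\Vert z_2\Vert\big)$ indeed forces $u_2=z_2=0$. So there is no discrepancy to report; your write-up simply fills in a proof the paper leaves to its reference.
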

  \begin{theorem}\label{d9} Let $(V, V^+, \Vert . \Vert )$ be an order smooth $\infty$-normed space and $W$ be a subspace of $V$. If $W$ is an $M$-ideal, then following are equivalent:
   \begin{itemize}
    \item [(i)]  $(W, W^{+},  \Vert . \Vert )$  is an order smooth $\infty$-normed space;
   \item [(ii)]  if $f\in W^{*+}$, then there is a $g\in V^{*+}$ such that $g_{|_W}=f$;
  \item [(iii)] If $\varphi_{W^{\perp}}(V^{*+})= \overline{\varphi_{W^{\perp}}(V^{*+})}^{w^*}$; 
     \item [(iv)] $\Vert f\Vert= \sup\{f(w): w\in W^{+}\cap W_{1}\}$  for all  $f\in W^{*+}$.
   \end{itemize}
  \end{theorem}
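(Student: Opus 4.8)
The plan is to establish the cycle of implications $(i)\Rightarrow(iv)\Rightarrow(ii)\Leftrightarrow(iii)\Rightarrow(i)$, which makes all four statements equivalent. Two of the links are already in hand: the equivalence $(ii)\Leftrightarrow(iii)$ is literally Proposition \ref{b3} (condition $(ii)$ being the positive–extension property for functionals on $W$, and $(iii)$ the $w^{*}$–closedness of $\varphi_{W^{\perp}}(V^{*+})$), and $(i)\Rightarrow(ii)$ also follows at once from Theorem \ref{c1}. So the real content is $(iii)\Rightarrow(i)$, $(i)\Rightarrow(iv)$ and $(iv)\Rightarrow(ii)$, and the $M$–ideal hypothesis will be used only in the first of these.

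For $(iii)\Rightarrow(i)$ — the heart of the argument — I would exploit the $M$–ideal decomposition $V^{*}=W^{\perp}\oplus_{1}W^{\perp'}$. Granting $(iii)$, Lemma \ref{c6} identifies $(V^{*}/W^{\perp},(V^{*}/W^{\perp})^{+},\Vert . \Vert)$ isometrically and order–isomorphically with $(W^{\perp'},W^{\perp'+},\Vert . \Vert)$, and Lemma \ref{c7} shows the latter is an order smooth $1$–normed space (its proof in fact produces the decompositions witnessing $(OS.1.2)$). Since $V^{*}/W^{\perp}$ is the Banach dual of $W$ and, by Lemma \ref{b1}(i), its quotient cone coincides with the dual cone of $W^{+}$, it follows that the dual of $W$ is an order smooth $1$–normed space satisfying $(OS.1.2)$; Theorem \ref{b2}(i) with $p=\infty$ then yields that $(W,W^{+},\Vert . \Vert)$ is order smooth $\infty$–normed, i.e. $(i)$. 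I expect this to be the main obstacle: $W$, being only a subspace of $V$, automatically inherits $(O.\infty.1)$ but not $(O.\infty.2)$, and it is precisely the $M$–ideal structure that allows one to replace the quotient $V^{*}/W^{\perp}$ by the complemented subspace $W^{\perp'}\subseteq V^{*}$ and transport order smoothness back through the duality.

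The remaining two implications are direct. For $(i)\Rightarrow(iv)$: given $f\in W^{*+}$, the inequality $\sup\{f(w):w\in W^{+}\cap W_{1}\}\le\Vert f\Vert$ is trivial, and for the reverse take $w\in W_{1}$, $\epsilon>0$, and use $(O.\infty.2)$ on $W$ (available by $(i)$) to write $w=w_{1}-w_{2}$ with $w_{i}\in W^{+}$ and $\max\{\Vert w_{1}\Vert,\Vert w_{2}\Vert\}<1+\epsilon$; then $w_{i}/(1+\epsilon)\in W^{+}\cap W_{1}$, so $|f(w)|\le\max\{f(w_{1}),f(w_{2})\}\le(1+\epsilon)\sup\{f(u):u\in W^{+}\cap W_{1}\}$, and letting $\epsilon\to0$ and taking the supremum over $w\in W_{1}$ gives $(iv)$. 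For $(iv)\Rightarrow(ii)$: given $f\in W^{*+}$, choose by Hahn–Banach an extension $g\in V^{*}$ with $g|_{W}=f$ and $\Vert g\Vert=\Vert f\Vert$; then, using $(iv)$ together with $W^{+}\cap W_{1}\subseteq V^{+}\cap V_{1}$,
\begin{align*}
\Vert g\Vert=\Vert f\Vert &=\sup\{f(w):w\in W^{+}\cap W_{1}\}=\sup\{g(w):w\in W^{+}\cap W_{1}\}\\
&\le\sup\{g(v):v\in V^{+}\cap V_{1}\}\le\Vert g\Vert,
\end{align*}
so $\sup\{g(v):v\in V^{+}\cap V_{1}\}=\Vert g\Vert$. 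Since $V$ is order smooth $\infty$–normed, $V^{*}$ is order smooth $1$–normed and satisfies $(OS.1.2)$ (Theorem \ref{duality-of-p-thry}); writing $g=g_{1}-g_{2}$ with $g_{i}\in V^{*+}$ and $\Vert g_{1}\Vert+\Vert g_{2}\Vert\le\Vert g\Vert$, we get for $v\in V^{+}\cap V_{1}$ that $g(v)\le g_{1}(v)\le\Vert g_{1}\Vert$, hence $\Vert g\Vert\le\Vert g_{1}\Vert\le\Vert g_{1}\Vert+\Vert g_{2}\Vert\le\Vert g\Vert$; this forces $g_{2}=0$, so $g=g_{1}\in V^{*+}$ is the desired positive extension of $f$, establishing $(ii)$ and closing the cycle.
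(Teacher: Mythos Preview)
Your cycle $(i)\Rightarrow(iv)\Rightarrow(ii)\Leftrightarrow(iii)\Rightarrow(i)$ is correct, and the implications $(iii)\Rightarrow(i)$ and $(i)\Rightarrow(iv)$ match the paper's argument essentially line for line (the paper also invokes Lemmas~\ref{c6} and~\ref{c7} followed by Theorem~\ref{b2} for the former, and a direct $(O.\infty.2)$ decomposition for the latter).

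The genuine difference is in $(iv)\Rightarrow(ii)$. The paper's route is heavier: after extending $f$ to $g$ by Hahn--Banach and writing $g=g_{1}-g_{2}$ via $(OS.1.2)$, it invokes the $M$-ideal hypothesis through Theorem~\ref{odr-cone-dec} to further split each $g_{i}$ along the $L$-decomposition $V^{*}=W^{\perp}\oplus_{1}W^{\perp'}$ into pieces $g_{ij}\in W^{\perp+}$ or $W^{\perp'+}$, and then uses $(iv)$ on the restricted inequality $0\le f\le f_{12}$ to force three of the four pieces to vanish. Your argument bypasses this entirely: from $(iv)$ and $W^{+}\cap W_{1}\subseteq V^{+}\cap V_{1}$ you get $\sup\{g(v):v\in V^{+}\cap V_{1}\}=\Vert g\Vert$, and then a single application of $(OS.1.2)$ on $V^{*}$ already forces $g_{2}=0$. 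This is both shorter and strictly more general --- your $(iv)\Rightarrow(ii)$ uses only that $V$ is order smooth $\infty$-normed, not that $W$ is an $M$-ideal --- so the $M$-ideal hypothesis in your proof is confined to $(iii)\Rightarrow(i)$, whereas the paper uses it twice.
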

  \begin{proof}
It follows from Theorem \ref{c1} that $(i)$ implies $(ii)$. Also $(ii)$ implies $(iii)$ follows from Proposition \ref{b3}. Therefore, it is enough to show the following cases: 
  \item  [$(iii)\implies (i):$]
   Since $W$ is an $M$-ideal, by Lemma \ref{c7},  $(W^{\perp'}, W^{\perp'}+, \Vert.\Vert)$ is an order smooth $1$-normed space satisfying $(OS.1.2)$. Since $\varphi_{W^{\perp}}(V^{*+})= \overline{\varphi_{W^{\perp}}(V^{*+})}^{w^*}$,
    thus by Lemma \ref{c6}, we have $(V^{*}/W^{\perp}, (V^{*}/W^{\perp})^+,  \Vert . \Vert )$ is an order smooth $1$-normed space
    satisfies $(OS.1.2)$. Hence by Theorem \ref{b2}, $(W, W^{+},  \Vert . \Vert )$ is an order smooth $\infty$-normed space. 
  \item [$(iv)\implies (ii):$]
  Let $f$ be a positive bounded linear functional on $W.$ By the Hahn Banach Theorem, there exists $g\in V^{*}$ such that $ g_{|_W}=f$ and  $\Vert g  \Vert  =   \Vert f  \Vert .$ We claim 
	that $g$ is positive. Since $V^{*}$ satisfy $(OS.1.2),$  there are $g_1, g_2 \in V^{*+}$ such that 
	$$g = g_1 - g_2  \textrm{~with~}   \Vert  g  \Vert  =  \Vert  g_1  \Vert   +  \Vert  g_2  \Vert .$$
	 Since $g_1, g_2 \in V ^{*+}$ and $V^*$ is complete, by Theorem \ref{odr-cone-dec}, there are $g_{11}, g_{21}\in W^{\perp +}$ and $g_{12}, g_{22} \in W^{\perp '+}$ such that 
	$g_1 = g_{11} + g_{12}$ with $  \Vert  g_1  \Vert  =   \Vert  g_{11}  \Vert  +   \Vert  g_{12}  \Vert $ and $g_2 = g_{21} + g_{22} $ with $  \Vert g_2  \Vert  =  \Vert g_{21}  \Vert  +   \Vert g_{22}  \Vert .$  
	Now $g = g_{11} - g_{21} + g_{12} - g_{22},$  where $g_{11}, g_{21} \in W^{\perp +}$ and $g_{12}, g_{22} \in W^{\perp'+}$ such that
	$  \Vert g  \Vert  =  \Vert g_{11}  \Vert  +   \Vert  g_{21}  \Vert  +   \Vert g_{12}  \Vert  +   \Vert  g_{22}  \Vert .$ If $f_{ij} =\left.g_{ij}\right|_W$ for all $i, j \in \{1, 2\}$. Then $f_{11} = f_{21} = 0,$ 
	so that $f = f_{12} - f_{22}.$ Further, as $f$ is positive, we have $0\leq f\leq f_{12}.$ Let $\epsilon>0$, then by assumption, there exist $w\in W^{+}\cap W_{1}$ such that $\Vert f\Vert-\epsilon< f(w). $ Since $0\leq f\leq f_{1}$,
	 thus we have $0\leq f(w)\leq f_{12}(w)$. Since $\Vert f \Vert-\epsilon \leq \Vert f_{12}\Vert$ and $\epsilon$ is arbitrary, 
	 we have $\Vert f\Vert \le \Vert f_{12}\Vert.$ Therefore,
	\begin{align*}
		  \Vert  f  \Vert & \leq    \Vert  f_{12}  \Vert \\
		& \leq    \Vert  g_{11}  \Vert  +   \Vert  g_{21}  \Vert  +   \Vert  g_{12}  \Vert  +   \Vert  g_{22}  \Vert \\
		& =    \Vert  g  \Vert  =   \Vert  f  \Vert 
	\end{align*}
	and consequently, $g_{11} = g_{21} = g_{22} = 0.$ Hence $g = g_{12} \in V^{*+}.$ 

  \item [$(i)\implies (iv):$] Let $f\in W^{*+}$. Let $\epsilon >0$, then there exist $ w\in W$ and $
 \Vert w\Vert <1$ such that $\Vert f\Vert-\epsilon < f(w)$. Since $W$ is an order smooth $\infty$-normed space, there exist $w_{1}, w_{2}\in W^{+}$ such that $w= w_{1}- w_{2}$ and $\max\{\Vert w_{1}\Vert , \Vert w_{2}\Vert \}
 < 1$ .
 Since $w_{1}, w_{2} \geq 0$, we have $f(w_{1}), f(w_{2}) \geq 0$. Now we have $\Vert f\Vert -\epsilon< f(w) \leq f(w_{1}) \leq \sup\{f(w): w\in W^{+}\cap W_{1}\}$. Since $\epsilon >0$ ia an arbitrary, we have $ \Vert f\Vert= \sup\{f(w): w\in W^{+}\cap W_{1}\}$  for all $f\in W^{*+}.$

  \end{proof}
  
  \begin{remark} \label{d10} Let $W$ be a closed subspace of $A(K)$ space.  If $W$ is an $M$-ideal in $A(K)$, then $W$ is an order smooth $\infty$-normed space (see e.g. \cite[Proposition 7.17]{LIMA72}). Hence $W$ is 
  an smooth $\infty$ order ideal of $A(K)$. In general too, we believe that these equivalent conditions are redundant in order smooth $\infty$-normed spaces, however we have not been able to show this.
  \end{remark}

{\bf Acknowledgments.}
\vskip .3cm
 I am grateful to my supervisor Dr. Anil Kumar Karn for his constant support, comments and fruitful suggestions about the paper. The work has been supported by the research grant of 
	 Department of Atomic Energy (DAE), Government of India.

\vskip .3cm
\noindent{\bf Address}:\\
{\bf Anindya Ghatak}\\
School of Mathematical Sciences\\
National Institute of Science Education and Research, Bhubaneswar, HBNI \\
P.O. - Jatni, District - Khurda, Odisha - 752050, India.\\
{\it E-mails}: anindya.ghatak@niser.ac.in

\end{document}